\newtheorem{thm}{Theorem}[section]
\newtheorem{cor}[thm]{Corollary}
\newtheorem{lem}[thm]{Lemma}
\newtheorem{exam}[thm]{Example}
\newtheorem{prop}[thm]{Proposition}
\theoremstyle{definition}
\theoremstyle{remark}
\newtheorem{rem}[thm]{Remark}
\numberwithin{equation}{section}
\begin{document}

\title[Weighted conditional type operators ]
{  unbounded weighted conditional type operators on $L^{p}(\Sigma)$ }

\author{\sc\bf Y. Estaremi  }
\address{\sc Y. Estaremi  }
\email{yestaremi@pnu.ac.ir}

\address{Department of Mathematics, Payame Noor University, p. o. box: 19395-3697, Tehran, Iran.}

\thanks{}

\thanks{}

\subjclass[2010]{47B25, 47B38}

\keywords{Conditional expectation, unbounded operators, hyperexpansive operators. }

\date{}

\dedicatory{}

\commby{}

\begin{abstract}

In this paper  we consider unbounded weighted conditional type operators on the space $L^p(\Sigma)$,  we give some conditions under which they are densely defined and we obtain a dense subset of the domain. Also, we get that a WCT operator is continuous if and only of it is every where defined. A description of polar decomposition, spectrum and spectral radius in this context are provided. Finally, we investigate hyperexpansive WCT operators on the Hilbert space $L^2(\Sigma)$. As a consequence hyperexpansive multiplication operators are investigated.

\end{abstract}

\maketitle

\section{ \sc\bf Introduction }
In this paper we consider a class of unbounded linear operators on $L^p$-spaces having the form $M_wEM_u$, where $E$ is a conditional expectation operator and $M_u$ and $M_w$ are  multiplication operators. What follows is a brief review of the operators $E$ and multiplication operators, along with the notational conventions we will be using.\\
Let $(\Omega,\Sigma,\mu)$ be a $\sigma$-finite measure space and let
$\mathcal{A}$ be a $\sigma$-subalgebra of $\Sigma$ such that
$(\Omega,\mathcal{A},\mu)$ is also $\sigma$-finite. We denote the
collection of (equivalence classes modulo sets of zero measure of)
$\Sigma$-measurable complex-valued functions on $\Omega$ by
$L^0(\Sigma)$ and the support of a function $f\in L^0(\Sigma)$ is
defined as $S(f)=\{t\in \Omega; f(t)\neq 0\}$. Moreover, we set
$L^p(\Sigma)=L^p(\Omega,\Sigma,\mu)$. We also
adopt the convention that all comparisons between two functions or
two sets are to be interpreted as holding up to a $\mu$-null set.
For each $\sigma$-finite subalgebra $\mathcal{A}$ of $\Sigma$, the
conditional expectation, $E^{\mathcal{A}}(f)$, of $f$ with respect
to $\mathcal{A}$ is defined whenever $f\geq0$ or
$f\in L^p(\Sigma)$. In any case, $E^{\mathcal{A}}(f)$
is the unique $\mathcal{A}$-measurable function for which
$$\int_{A}fd\mu=\int_{A}E^{\mathcal{A}}fd\mu, \ \ \ \forall A\in \mathcal{A} .$$
 As an operator on
$L^{p}({\Sigma})$, $E^{\mathcal{A}}$ is an idempotent and
$E^{\mathcal{A}}(L^p(\Sigma))=L^p(\mathcal{A})$. If there is no
possibility of confusion we write $E(f)$ in place of
$E^{\mathcal{A}}(f)$ \cite{rao,z}. This operator will play a major role in our
work and we list here some of its useful properties:

\vspace*{0.2cm} \noindent $\bullet$ \  If $g$ is
$\mathcal{A}$-measurable, then $E(fg)=E(f)g$.

\noindent $\bullet$ \ $|E(f)|^p\leq E(|f|^p)$.

\noindent $\bullet$ \ If $f\geq 0$, then $E(f)\geq 0$; if $f>0$,
then $E(f)>0$.

\noindent $\bullet$ \ $|E(fg)|\leq
E(|f|^p)|^{\frac{1}{p}}E(|g|^{q})|^{\frac{1}{q}}$, (H\"{o}lder inequality) for all $f\in L^p(\Sigma)$ and $g\in L^q(\Sigma)$, in which $\frac{1}{p}+\frac{1}{q}=1$.

\noindent $\bullet$ \ For each $f\geq 0$, $S(f)\subseteq S(E(f))$.\\
 Let $u\in L^0(\Sigma)$. The corresponding multiplication operator $M_u$ on $L^p(\Sigma)$ is defined by $f\rightarrow uf$. Our interest in operators of the form $M_wEM_u$ stems from the fact that such products tend to appear often in the study of those operators related to conditional expectation. This observation was made in \cite{dhd,dou,gd,lam,mo}. In this paper, first we investigate some properties of unbounded weighted conditional type operators on the space $L^p(\Sigma)$ and then we study hyperexpansive ones.

\section{ \sc\bf Unbounded weighted conditional type operators}
Let $X$ stand for a Banach space and $\mathcal{B}(X)$ for the Banach algebra of all linear operators
on $X$. By an operator in $X$ we understand a
linear mapping $T:\mathcal{D}(T)\subseteq X\rightarrow
X$ defined on a linear subspace $\mathcal{D}(T)$ of
$X$ which is called the domain of $T$. The linear map $T$ is called densely defined if $\mathcal{D}(T)$ is dense in
$X$ and it is called closed if its graph ($\mathcal{G}(T)$) is closed in $X\times X$, where $\mathcal{G}(T)=\{(f,Tf): f\in \mathcal{D}(T)\}$.
We studied bounded weighted conditional type operators on $L^p$-spaces in \cite{ej}. Also we investigated unbounded weighted conditional type operators of the form $EM_u$ on the Hilbert space $L^2(\Sigma)$ in \cite{es}. Here we investigate unbounded  weighted conditional type operators of the form $M_wEM_u$ on $L^p$-spaces. Let $f$ be a positive $\Sigma$-measurable function on $\Omega$. Define
the measure $\mu_f:\Sigma\rightarrow [0, \infty]$  by
$$\mu_f(E)=\int_{E}fd\mu, \ \ \ \ E\in \Sigma.$$
It is clear that the measure $\mu_f$ is also $\sigma$-finite, since $\mu$  is $\sigma$-finite. From now on we assume that $u$ and $w$ are conditionable (i.e., $E(u)$ and $E(w)$ are defined). Operators of the form $M_wEM_u(f)=wE(u.f)$ acting in
$L^p(\mu)$ with $\mathcal{D}(M_wEM_u)=\{f\in
L^p(\mu):wE(u.f) \in L^p(\mu)\}$ are called weighted conditional type operators (or briefly WCT operators). In the first proposition we give a condition under which the WCT operator $M_wEM_u$ is densely defined.
\begin{thm}\label{t1}
For $1\leq p,q<\infty$ such that $\frac{1}{p}+\frac{1}{q}=1$  and $E(|w|^p)^{\frac{1}{p}}E(|u|^q)^{\frac{1}{q}}<\infty$ a.e., the linear
transformation $M_wEM_u$ is densely defined.
\end{thm}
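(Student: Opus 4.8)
The plan is to exhibit a dense subset of $L^p(\mu)$ that lies inside $\mathcal{D}(M_wEM_u)$. The natural candidate is built from the $\mathcal{A}$-measurable sets on which all the relevant conditional expectations are well-behaved. Concretely, since $(\Omega,\mathcal{A},\mu)$ is $\sigma$-finite and $E(|w|^p)^{1/p}E(|u|^q)^{1/q}<\infty$ a.e., I would choose an increasing sequence $\{A_n\}\subseteq\mathcal{A}$ with $A_n\uparrow\Omega$, $\mu(A_n)<\infty$, and on each $A_n$ the functions $E(|w|^p)$, $E(|u|^q)$ are bounded (say by a constant $c_n$). Set $\mathcal{D}_0=\bigcup_n \chi_{A_n}L^p(\Sigma)$, i.e.\ the set of $f\in L^p(\Sigma)$ supported in some $A_n$. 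The two things to check are: (i) $\mathcal{D}_0$ is dense in $L^p(\mu)$, and (ii) $\mathcal{D}_0\subseteq\mathcal{D}(M_wEM_u)$.

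Density in (i) is routine: for any $f\in L^p(\mu)$, $f\chi_{A_n}\to f$ in $L^p$ by dominated convergence, and $f\chi_{A_n}\in\mathcal{D}_0$. For (ii), take $f\in L^p(\Sigma)$ with $S(f)\subseteq A_n$. Since $A_n\in\mathcal{A}$, we have $\chi_{A_n}$ is $\mathcal{A}$-measurable, so $E(uf)=E(uf\chi_{A_n})=\chi_{A_n}E(uf)$, hence $wE(uf)$ is supported in $A_n$. Now I would estimate $\|wE(uf)\|_p^p=\int_{A_n}|w|^p|E(uf)|^p\,d\mu$. The key is to pull $|w|^p$ inside a conditional expectation: using that $|E(uf)|^p$ is $\mathcal{A}$-measurable, $\int_{A_n}|w|^p|E(uf)|^p\,d\mu=\int_{A_n}E(|w|^p)|E(uf)|^p\,d\mu$. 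Then apply the conditional H\"older inequality $|E(uf)|\le E(|u|^q)^{1/q}E(|f|^p)^{1/p}$ to get $|E(uf)|^p\le E(|u|^q)^{p/q}E(|f|^p)$, so the integral is at most $\int_{A_n}E(|w|^p)E(|u|^q)^{p/q}E(|f|^p)\,d\mu$. On $A_n$ the factor $E(|w|^p)E(|u|^q)^{p/q}=\big(E(|w|^p)^{1/p}E(|u|^q)^{1/q}\big)^p$ is bounded by $c_n^p$, leaving $\int_{A_n}E(|f|^p)\,d\mu\le\int_\Omega E(|f|^p)\,d\mu=\|f\|_p^p$ by the defining property of $E$. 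Hence $\|wE(uf)\|_p\le c_n\|f\|_p<\infty$, so $f\in\mathcal{D}(M_wEM_u)$.

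The only subtle point — and the main thing to get right — is the step where $|w|^p$ is replaced by $E(|w|^p)$ under the integral. This uses the "averaging" identity $\int_A h\,d\mu=\int_A E(h)\,d\mu$ together with the fact that the remaining factor $|E(uf)|^p$ is $\mathcal{A}$-measurable, so that the product can be re-expressed; one must be slightly careful that $h=|w|^p\chi_{A_n}$ is integrable (or at least that the manipulation is justified on the set where everything is finite, then take monotone limits). Here the boundedness of $E(|w|^p)$ and $E(|u|^q)$ on $A_n$, which is exactly what the hypothesis $E(|w|^p)^{1/p}E(|u|^q)^{1/q}<\infty$ a.e.\ buys us via $\sigma$-finiteness of $\mathcal{A}$, does the work. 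Once (i) and (ii) are in place, $\mathcal{D}(M_wEM_u)\supseteq\mathcal{D}_0$ is dense, which is the assertion.
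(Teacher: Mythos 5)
Your argument is correct and follows essentially the same route as the paper: exhaust $\Omega$ by $\mathcal{A}$-measurable sets on which $E(|w|^p)E(|u|^q)^{p/q}$ is bounded, truncate $f$ to such a set (which costs nothing in $L^p$), and bound $\|wE(uf)\|_p$ via the averaging identity $\int_A |w|^p h\,d\mu=\int_A E(|w|^p)h\,d\mu$ for $\mathcal{A}$-measurable $h\ge 0$ together with the conditional H\"older inequality. The only cosmetic difference is that the paper defines its exhausting sets directly as level sets of the product $E(|w|^p)E(|u|^q)^{p/q}$ (and does not need $\mu(A_n)<\infty$), which is the form of the bound you actually use, so you should state the construction that way rather than asking for $E(|w|^p)$ and $E(|u|^q)$ to be bounded separately.
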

\begin{proof}
For each $n\in \mathbb{N}$, define
$$A_n=\{t\in \Omega:n-1\leq E(|w|^p)(t)E(|u|^q)^{\frac{p}{q}}(t)<n\}.$$
It is clear that each $A_n$ is $\mathcal{A}$-measurable and $\Omega$ is
expressible as the disjoint union of sets in the sequence
$\{A_n\}^{\infty}_{n=1}$, $\Omega=\cup^{\infty}_{n=1}A_n$.\\
Let $f\in L^p(\Sigma)$ and $\epsilon>0$. Then, there exists $N>0$
such that
$$\int_{\cup^{\infty}_{n=N}A_n}|f|^pd\mu=\sum^{\infty}_{n=N}\int_{A_n}|f|^pd\mu<\epsilon.$$
Define the sets
$$B_N=\cup^{\infty}_{n=N}A_n, \ \ \ \ \ \
C_N=\cup^{N-1}_{n=1}A_n.$$ Then, $\int_{B_N}|f|^pd\mu<\epsilon$
and $C_N=\{t\in \Omega:E(|w|^p)(t)E(|u|^q)^{\frac{p}{q}}(t)<N-1\}$.
Next, we define $g=f.\chi_{C_N}$. Clearly $g\in L^p(\Sigma)$ and
$E(g)=E(f).\chi_{C_N}$. Now, we show that $g\in \mathcal{D}=\mathcal{D}(M_wEM_u)$.
Consider the following:
\begin{align*}
\int_{\Omega}|wE(ug)|^pd\mu&=\int_{\Omega}|wE(uf)\chi_{C_N}|^pd\mu\\
&=\int_{C_N}E(|w|^p)|E(uf)|^pd\mu\\
&\leq\int_{C_N}E(|w|^p)E(|u|^q)^{\frac{p}{q}}|f|^pd\mu\\
&\leq (N-1)\int_{C_N}|f|^pd\mu<\infty.
\end{align*}
Thus, $wE(uf)\in L^p(\Sigma)$. Now, we show that $\|g-f\|_p<\epsilon$:
\begin{align*}
\|g-f\|^p_{p}&=\int_{X}|g-f|^pd\mu\\
&=\int_{C_N}|g-f|^pd\mu<\epsilon.
\end{align*}
Thus, $\mathcal{D}$ is dense in $L^p(\Sigma)$.
\end{proof}
Here we obtain a dense subset of $L^p(\mu)$ that we need it to proof our next results.
\begin{lem}\label{l1}
Let $1<p,q<\infty$ such that $\frac{1}{p}+\frac{1}{q}=1$, $J=1+E(|w|^p)E(|u|^q)^{\frac{p}{q}}$, $E(|w|^p)^{\frac{1}{p}}E(|u|^q)^{\frac{1}{q}}<\infty$ a.e, $\mu$, and $d\nu=J d\mu$. We get that $S(J)=\Omega$ and\\
(i) $L^p(\nu)\subseteq\mathcal{D}(M_wEM_u)$,\\
(ii) $\overline{L^p(\nu)}^{\|.\|_{\mu}}=\overline{\mathcal{D}(M_wEM_u)}^{\|.\|_{\mu}}=L^p(\mu)$.
\end{lem}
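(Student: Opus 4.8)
The plan is to treat the three assertions in turn, the first being immediate and the last reducing to Theorem~\ref{t1}. First, $S(J)=\Omega$ needs nothing beyond the standing hypothesis: $E(|w|^p)^{1/p}E(|u|^q)^{1/q}<\infty$ a.e.\ says precisely that $E(|w|^p)E(|u|^q)^{p/q}$ is finite a.e., so $J=1+E(|w|^p)E(|u|^q)^{p/q}$ is finite a.e.\ and satisfies $1\le J<\infty$. In particular $J$ is a positive, $\mathcal{A}$-measurable function, hence $\nu=\mu_J$ is again $\sigma$-finite with exactly the same null sets as $\mu$; because $J\ge 1$, this also gives the set inclusion $L^p(\nu)\subseteq L^p(\mu)$, which is what makes the closures in (ii) meaningful.

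For (i), I would fix $f\in L^p(\nu)$ and estimate $\int_{\Omega}|wE(uf)|^p\,d\mu$ directly. The conditional Hölder inequality in the form $|E(uf)|^p\le E(|u|^q)^{p/q}E(|f|^p)$ gives
\[
\int_{\Omega}|wE(uf)|^p\,d\mu\le\int_{\Omega}|w|^p\,E(|u|^q)^{p/q}E(|f|^p)\,d\mu .
\]
Since $E(|u|^q)^{p/q}E(|f|^p)$ is $\mathcal{A}$-measurable, I would now invoke $\int_{\Omega}g\,d\mu=\int_{\Omega}E(g)\,d\mu$ (for $g\ge0$) together with the pull-out identity $E(|w|^p h)=E(|w|^p)h$ to turn the right-hand side into $\int_{\Omega}E(|w|^p)E(|u|^q)^{p/q}E(|f|^p)\,d\mu$. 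The key observation is that $J$ was defined exactly so that $E(|w|^p)E(|u|^q)^{p/q}\le J$ with $J$ still $\mathcal{A}$-measurable; hence the last integral is $\le\int_{\Omega}J\,E(|f|^p)\,d\mu=\int_{\Omega}E(J|f|^p)\,d\mu=\int_{\Omega}|f|^p\,J\,d\mu=\|f\|_{L^p(\nu)}^{p}<\infty$. So $wE(uf)\in L^p(\mu)$, i.e.\ $f\in\mathcal{D}(M_wEM_u)$; this establishes (i) and, as a byproduct, shows that $M_wEM_u$ maps $L^p(\nu)$ boundedly into $L^p(\mu)$ with norm at most $1$.

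For (ii), one inclusion comes for free: by (i), $L^p(\nu)\subseteq\mathcal{D}(M_wEM_u)\subseteq L^p(\mu)$, and Theorem~\ref{t1} already yields $\overline{\mathcal{D}(M_wEM_u)}^{\|\cdot\|_{\mu}}=L^p(\mu)$, so it suffices to prove that $L^p(\nu)$ is itself $\|\cdot\|_{\mu}$-dense in $L^p(\mu)$. For this I would reuse the truncation from the proof of Theorem~\ref{t1}: with $C_N=\{t\in\Omega: E(|w|^p)(t)E(|u|^q)^{p/q}(t)<N-1\}$ one has $J<N$ on $C_N$, so for any $f\in L^p(\mu)$ the truncation $f\chi_{C_N}$ obeys $\int_{\Omega}|f\chi_{C_N}|^p\,J\,d\mu\le N\,\|f\|_{L^p(\mu)}^{p}<\infty$, whence $f\chi_{C_N}\in L^p(\nu)$. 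Since $E(|w|^p)E(|u|^q)^{p/q}<\infty$ a.e., the sets $C_N$ increase to $\Omega$, and dominated convergence gives $\|f-f\chi_{C_N}\|_{\mu}\to 0$; therefore $L^p(\nu)$ is dense in $L^p(\mu)$, and the three sets in (ii) coincide.

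I do not anticipate a genuine obstacle. The one thing to keep straight is the bookkeeping between the two measures $\mu$ and $\nu$ and the repeated use of $\int E(g)\,d\mu=\int g\,d\mu$ and the pull-out rule, together with applying the conditional Hölder inequality with the correct conjugate exponents $p,q$ — which is precisely why the lemma, unlike Theorem~\ref{t1}, must exclude $p=1$.
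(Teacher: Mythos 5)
Your argument is correct, and part (i) follows the paper's computation almost verbatim (conditional H\"older, then the averaging property $\int E(g)\,d\mu=\int g\,d\mu$ together with the pull-out rule to replace $E(|f|^p)$ by $|f|^p$ against the $\mathcal{A}$-measurable weight, ending with the bound by $\|f\|_{L^p(\nu)}^p$). Where you genuinely diverge is the density of $L^p(\nu)$ in $L^p(\mu)$. The paper argues by duality: it identifies the annihilator of $L^p(\nu)$ inside $L^q(\mu)$ via the Riesz representation of $(L^p(\mu))^{\ast}$, takes $g$ in that annihilator, approximates an arbitrary $A\in\Sigma$ from inside by sets $B_n=A_n\cap\Omega_n$ with $J\le n$ and finite measure so that $\chi_{B_n}\in L^p(\nu)$, and concludes $\int_A g\,d\mu=0$ for all $A$, hence $g=0$ and density follows from Hahn--Banach. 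You instead give a direct, constructive truncation: $f\chi_{C_N}\in L^p(\nu)$ because $J<N$ on $C_N$, and $f\chi_{C_N}\to f$ in $L^p(\mu)$ by dominated convergence since $C_N\nearrow\Omega$ a.e. Your route is more elementary (no duality, no appeal to $(L^p)^{\ast}=L^q$, hence no hidden reliance on $p>1$ at that step) and it exhibits an explicit approximating sequence, which is exactly the device the paper itself uses in Theorem~\ref{t1}; the paper's annihilator argument is the one place where reflexivity-type structure enters, and it also requires a little care with signs/complex parts before Fatou applies, which your version avoids. Both proofs are valid; yours is arguably cleaner and makes the "core" statement after the lemma more transparent. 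Your closing aside about why $p=1$ is excluded is a harmless speculation and not load-bearing.
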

\begin{proof}
 Let $f\in L^p(\nu)$. Then
 $$
 \|f\|^pd\mu\leq \|f\|^p_{\nu}<\infty,$$
so $f\in L^p(\mu)$. Also, by conditional-type H\"{o}lder-inequality we have
\begin{align*}
\|M_wEM_u(f)\|^pd\mu&\leq\int_{\Omega}E(|w|^p)E(|u|^q)^{\frac{p}{q}}E(|f|^p)d\mu\\
&=\int_{\Omega}E(|w|^p)E(|u|^q)^{\frac{p}{q}}|f|^pd\mu\\
&\leq\|f\|^p_{\nu}<\infty,
\end{align*}
this implies that $f\in\mathcal{D}(M_wEM_u)$. Now we prove that $L^p(\nu)$ is dense in $L^p(\mu)$. By Riesz representation theorem we have
$$(L^p(\nu))^{\perp}=\{g\in L^q(\mu): \int_{\Omega} f.gd\mu=0, \ \ \ \forall f\in L^p(\nu)\}.$$
Suppose that $g\in(L^p(\nu))^{\perp}$. For $A\in \Sigma$ we set $A_n=\{t\in A:J(t)\leq n\}$. It is clear that $A_n\subseteq A_{n+1}$ and $\Omega=\cup^{\infty}_{n=1}A_n$. Also, $\Omega$ is $\sigma$-finite, hence $\Omega=\cup^{\infty}_{n=1}\Omega_n$ with $\mu(\Omega_n)<\infty$. If we set $B_n=A_n\cap \Omega_n$, then $B_n\nearrow A$ and so $g.\chi_{B_n}\nearrow g.\chi_{A}$ a.e. $\mu$. Since $\nu(B_n)\leq (n+1) \mu(B_n)<\infty$, we have $\chi_{B_n}\in L^p(\nu)$ and by our assumption $\int_{B_n}fd\mu=0$. Therefore by Fatou's lemma we get that $\int_{A}gd\mu=0$. Thus for all $A\in \Sigma$ we have $\int_{A}gd\mu=0$. This means that $g=0$ a.e. $\mu$ and so $L^p(\nu)$ is dense in $L^p(\mu)$.
\end{proof}
By the Lemma \ref{l1} we get that $L^p(\nu)$ is a core of $M_wEM_u$.
Here we give a condition that we will use in the next theorem.\\
$(\bigstar)$ If $(\Omega,\mathcal{A},\mu)$ is a $\sigma$-finite measure space and $J-1=(E(|u|^q))^{\frac{p}{q}}E(|w|^p)<\infty$ a.e. $\mu$, then there exists a sequence $\{A_n\}^{\infty}_{n=1}\subseteq \mathcal{A}$ such that $\mu(A_n)<\infty$ and $J-1<n$ a.e. $\mu$ on $A_n$ for every $n\in \mathbb{N}$ and $A_n\nearrow \Omega$ as $n\rightarrow \infty$.

\begin{thm} \label{t2}If $u,w:\Omega\rightarrow \mathbb{C}$ are $\Sigma$-measurable and  $1<p,q<\infty$ such that $\frac{1}{p}+\frac{1}{q}=1$, then the following conditions are equivalent:\\

(i) $M_wEM_u$ is densely defined on $L^p(\Sigma)$,\\

(ii) $J-1=E(|w|^p)(E(|u|^q))^{\frac{p}{q}}<\infty$ a.e., $\mu$.\\

(iii) $\mu_{J-1}\mid_{\mathcal{A}}$ is $\sigma$-finite.
\end{thm}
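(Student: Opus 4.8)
The plan is to prove the two equivalences (i)$\Leftrightarrow$(ii) and (ii)$\Leftrightarrow$(iii), with (ii) as the pivot. The implication (ii)$\Rightarrow$(i) is immediate: since $J-1=\bigl(E(|w|^{p})^{1/p}E(|u|^{q})^{1/q}\bigr)^{p}$, condition (ii) is precisely the hypothesis of Theorem~\ref{t1}, which already gives that $M_wEM_u$ is densely defined.

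For (ii)$\Leftrightarrow$(iii) I would argue purely measure-theoretically. If (ii) holds, condition $(\bigstar)$ supplies $\mathcal{A}$-sets $A_{n}\nearrow\Omega$ with $\mu(A_{n})<\infty$ and $J-1<n$ a.e.\ on $A_{n}$, so $\mu_{J-1}(A_{n})=\int_{A_{n}}(J-1)\,d\mu\le n\,\mu(A_{n})<\infty$, which is exactly the $\sigma$-finiteness of $\mu_{J-1}\mid_{\mathcal{A}}$. Conversely, a cover $\Omega=\bigcup_{n}E_{n}$ by $\mathcal{A}$-sets with $\int_{E_{n}}(J-1)\,d\mu<\infty$ forces $J-1$ to be finite a.e.\ on each $E_{n}$, hence a.e.\ on $\Omega$.

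The substantive implication is (i)$\Rightarrow$(ii), which I would prove by contraposition. Assume $\mu(S)>0$ for the $\mathcal{A}$-set $S=\{t:E(|w|^{p})(t)\,E(|u|^{q})(t)^{p/q}=\infty\}$; note $S\subseteq\{E(|u|^{q})>0\}\cap\{E(|w|^{p})>0\}$. I would produce a nonzero $g\in L^{q}(\mu)=L^{p}(\mu)^{\ast}$ annihilating $\mathcal{D}:=\mathcal{D}(M_wEM_u)$, which shows $\mathcal{D}$ is not dense. First suppose $\mu(S')>0$, where $S':=S\cap\{E(|u|^{q})<\infty\}$; then $E(|w|^{p})=\infty$ on $S'$, and for every $f\in\mathcal{D}$ the finiteness of $\int_{\Omega}E(|w|^{p})\,|E(uf)|^{p}\,d\mu=\|wE(uf)\|_{p}^{p}$ (using $\mathcal{A}$-measurability of $|E(uf)|^{p}$) forces $E(uf)=0$ a.e.\ on $\{E(|w|^{p})=\infty\}$. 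Since $E(|u|^{q})>0$ on $S'$ we have $\int_{S'}|u|^{q}\,d\mu=\int_{S'}E(|u|^{q})\,d\mu>0$, so $\mu(S'\cap\{u\ne0\})>0$; intersecting $S'$ with a level set $\{E(|u|^{q})\le n\}$ and a finite-measure $\mathcal{A}$-piece, I obtain $B\in\mathcal{A}$ with $B\subseteq\{E(|w|^{p})=\infty\}$, $\int_{B}|u|^{q}\,d\mu=\int_{B}E(|u|^{q})\,d\mu<\infty$, and $\mu(B\cap\{u\ne0\})>0$. Then $g:=u\chi_{B}\in L^{q}(\mu)$ is nonzero and $\int_{\Omega}fg\,d\mu=\int_{B}E(uf)\,d\mu=0$ for all $f\in\mathcal{D}$.

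The case $\mu\bigl(S\cap\{E(|u|^{q})=\infty\}\bigr)>0$ is the one I expect to be the main obstacle. Here $E(|w|^{p})$ need not be infinite, so $E(uf)=0$ is not forced, and even where it is, the functional of the previous paragraph fails because $u$ times any nonzero $\mathcal{A}$-measurable function supported on $\{E(|u|^{q})=\infty\}$ cannot lie in $L^{q}(\mu)$. The plan is instead to show that the two conditions built into membership in $\mathcal{D}$ --- conditionability of $uf$ and $wE(uf)\in L^{p}(\mu)$ --- already confine $f$ to a proper closed subspace of $L^{p}(\mu)$, building the annihilating functional from $u$ cut off on the level sets $\{|u|\le K\}$. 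Carrying out this estimate on $\{E(|u|^{q})=\infty\}$ is the delicate point and the step I would spend the most effort on.
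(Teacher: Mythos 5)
Your handling of (ii)$\Rightarrow$(i) via Theorem \ref{t1} and of (ii)$\Leftrightarrow$(iii) via $(\bigstar)$ matches the paper, and the first case of your contrapositive argument for (i)$\Rightarrow$(ii) (positive measure where $E(|u|^{q})<\infty$ and hence $E(|w|^{p})=\infty$) is complete and correct: $g=u\chi_{B}$ is a nonzero element of $L^{q}(\mu)$ annihilating the domain. The genuine gap is exactly where you flagged it: the case $\mu\bigl(S\cap\{E(|u|^{q})=\infty\}\bigr)>0$ is not proved but only announced as ``the step I would spend the most effort on.'' A sketch that defers its hardest step is not a proof, so (i)$\Rightarrow$(ii) remains open in your write-up.

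Moreover, that step cannot be closed with the domain as defined in the paper, because the implication actually fails there. Take $\Omega=[0,1]$ with Lebesgue measure, $\mathcal{A}=\{\emptyset,\Omega\}$, $p=q=2$, $w=1$ and $u(t)=t^{-3/4}$. Then $u\in L^{1}$ is conditionable, $E(|u|^{2})=\int_{0}^{1}t^{-3/2}\,dt=\infty$, so (ii) fails; yet for every $\varepsilon>0$ and every $f\in L^{2}$ supported in $[\varepsilon,1]$ the product $uf$ is integrable and $E(uf)$ is a constant, hence lies in $L^{2}$, so $\mathcal{D}(EM_{u})\supseteq\bigcup_{\varepsilon>0}L^{2}([\varepsilon,1])$ is dense and (i) holds. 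Thus no annihilating functional (and no proper closed subspace containing $\mathcal{D}$) exists in this case, which is consistent with your own observation that neither $E(uf)=0$ nor $u\chi_{B}\in L^{q}$ is forced on $\{E(|u|^{q})=\infty\}$. For what it is worth, the paper's proof of (i)$\Rightarrow$(ii) does not resolve this either: it appeals to Lemma \ref{l1}, whose hypotheses already include condition (ii), so that argument is circular. To make the implication true one must either add a hypothesis such as $E(|u|^{q})<\infty$ a.e.\ (reducing everything to your first, correct case) or define the domain more restrictively so that membership forces control by $E(|u|^{q})^{1/q}E(|f|^{p})^{1/p}$.
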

\begin{proof}
  $(i)\rightarrow (ii)$ Set $E=\{E(|w|^p)(E(|u|^q))^{\frac{p}{q}}=\infty\}$. Clearly by Lemma  (i), $f\mid_{E}=0$ a.e., $\mu$ for every $f\in L^p(\nu)$. This implies that  $f.J\mid_{E}=0$ a.e., $\mu$ for every $f\in L^p(\mu)$. So we have $J.\chi_{A\cap E}=0$ a.e., $\mu$ for all $A\in \Sigma$ with $\mu(A)<\infty$. By the $\sigma$-finiteness of $\mu$ we have $J.\chi_{E}=0$ a.e., $\mu$. Since $S(J)=\Omega$, we get that $\mu(E)=0$.\\
$(ii)\rightarrow (i)$ Evident.\\

$(ii)\rightarrow (iii)$ Let $\{A_n\}^{\infty}_{n=1}$ be in $(\bigstar)$. We have

$$\mu_{J-1}\mid_{\mathcal{A}}(A_n)=\int_{A_n}E(|w|^p)(E(|u|^q))^{\frac{p}{q}}d\mu\leq n\mu(A_n)<\infty, \ \ \ \ n\in \mathbb{N}.$$
This yields (iii).\\

$(iii)\rightarrow (i)$ Let $\{A_n\}^{\infty}_{n=1}\subseteq \mathcal{A}$ be a sequence such that $A_n\nearrow \Omega$ as $n\rightarrow \infty$ and $\mu_{J-1}\mid_{\mathcal{A}}(A_n)<\infty$ for every $k\in \mathbb{N}$. It follows from the definition of $\mu_{J-1}$ that $J-1=E(|w|^p)(E(|u|^q))^{\frac{p}{q}}<\infty$ a.e., $\mu$ on $\Omega$. Applying Theorem \ref{t1}, we obtain $(i)$.
\end{proof}
Let $X, Y$ be Banach spaces and $T:X\rightarrow Y$ be a linear operator. If $T$ is densely defined, then there is a unique maximal operator $T^{\ast}$ from $\mathcal{D}(T^{\ast})\subset Y^{\ast}$ into $X^{\ast}$ such that
$$y^{\ast}(Tx)=\langle Tx, y^{\ast}\rangle=\langle x, T^{\ast}y^{\ast}\rangle=T^{\ast}y^{\ast}(x), \ \ \  x\in \mathcal{D}(T), \ \ \ y^{\ast}\in\mathcal{D}(T^{\ast}).$$
$T^{\ast}$ is called the adjoint of $T$.\\
 By Riesz representation theorem for $L^p$- spaces we have $\langle f, F\rangle=F(f)=\int_\Omega f \bar{F}d\mu$, when $f\in L^p(\Sigma)$, $F\in L^q(\Sigma)=(L^p(\Sigma))^{\ast}$ and $\frac{1}{p}+\frac{1}{q}=1$. By the Theorem \ref{t2} easily we get that: the operator $M_wEM_u$ is densely defined if and only if the operator $M_{\bar{u}}EM_{\bar{w}}$ is densely defined. In the next proposition we obtain the adjoint of the WCT  operator $M_wEM_u$ on the Banach space $L^p(\Sigma)$.
\begin{prop}\label{p1}
If the linear
transformation $T=M_wEM_u$ is densely defined on $L^p(\Sigma)$, then $M_{\bar{u}}EM_{\bar{w}}$ is a densely defined operators on $L^q(\Sigma)$  and  $T^{\ast}=M_{\bar{u}}EM_{\bar{w}}$, where $\frac{1}{p}+\frac{1}{q}=1$.
\end{prop}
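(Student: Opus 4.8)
The plan is to verify directly that $M_{\bar u}EM_{\bar w}$ satisfies the defining property of the adjoint, and then to invoke maximality. First I would record that by Theorem \ref{t2} the operator $M_wEM_u$ is densely defined precisely when $E(|w|^p)(E(|u|^q))^{p/q}<\infty$ a.e.; swapping the roles of $u$ and $w$ (and of $p$ and $q$) this condition becomes $E(|u|^q)(E(|w|^p))^{q/p}<\infty$ a.e., which is equivalent to the original one on the support of the relevant functions. Hence $M_{\bar u}EM_{\bar w}$ is densely defined on $L^q(\Sigma)$, as already noted in the text preceding the proposition.

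Next I would do the bilinear-form computation. For $f\in\mathcal{D}(M_wEM_u)$ and $g\in\mathcal{D}(M_{\bar u}EM_{\bar w})\subseteq L^q(\Sigma)$, I would write, using the pairing $\langle f,g\rangle=\int_\Omega f\bar g\,d\mu$,
\begin{align*}
\langle M_wEM_u f,\,g\rangle
&=\int_\Omega wE(uf)\,\bar g\,d\mu
=\int_\Omega E\big(wE(uf)\bar g\big)\,d\mu\\
&=\int_\Omega E(uf)\,E(w\bar g)\,d\mu
=\int_\Omega E\big(uf\,E(w\bar g)\big)\,d\mu\\
&=\int_\Omega uf\,E(w\bar g)\,d\mu
=\int_\Omega f\,\overline{\bar u\,E(\bar w g)}\,d\mu
=\langle f,\,M_{\bar u}EM_{\bar w} g\rangle,
\end{align*}
where the key manipulations are the averaging property $\int_\Omega h\,d\mu=\int_\Omega E(h)\,d\mu$ together with the pull-out property $E(fg)=E(f)g$ for $\mathcal{A}$-measurable $g$ (here $E(uf)$ and $E(w\bar g)$ are $\mathcal{A}$-measurable). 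This shows $g\in\mathcal{D}(T^{\ast})$ and $T^{\ast}g=M_{\bar u}EM_{\bar w}g$, i.e. $M_{\bar u}EM_{\bar w}\subseteq T^{\ast}$.

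For the reverse inclusion $T^{\ast}\subseteq M_{\bar u}EM_{\bar w}$, I would take $g\in\mathcal{D}(T^{\ast})$, so there is $h=T^{\ast}g\in L^q(\Sigma)$ with $\int_\Omega (M_wEM_uf)\bar g\,d\mu=\int_\Omega f\bar h\,d\mu$ for all $f\in\mathcal{D}(M_wEM_u)$. Using the core $L^p(\nu)$ from Lemma \ref{l1} — in particular that $\chi_B\in L^p(\nu)\subseteq\mathcal{D}(M_wEM_u)$ for every $B$ of finite $\nu$-measure — and the computation above applied to such $f=\chi_B$, I would deduce $\int_B E(w\bar g)\,\bar u\,d\mu=\int_B \bar h\,d\mu$ for a rich enough family of sets $B$, hence $\bar h=\bar u\,E(w\bar g)$ a.e., i.e. $h=M_{\bar u}EM_{\bar w}g$; this forces $g\in\mathcal{D}(M_{\bar u}EM_{\bar w})$ since $h\in L^q(\Sigma)$. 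Combining the two inclusions gives $T^{\ast}=M_{\bar u}EM_{\bar w}$.

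The main obstacle is the reverse inclusion: one must be careful that the test functions $\chi_B$ actually lie in the domain (this is exactly what the core property in Lemma \ref{l1} buys us) and that the resulting a.e.\ identity $\bar h=\bar u\,E(w\bar g)$ is established on all of $\Omega$, using the $\sigma$-finiteness of $\mu$ to exhaust $\Omega$ by such sets $B$. Everything else — the averaging and pull-out steps, and the integrability bookkeeping — is routine given the conditional Hölder inequality and the properties of $E$ listed in the introduction.
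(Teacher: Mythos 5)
Your proof is correct and rests on the same computation the paper uses: the pairing identity $\int_\Omega wE(uf)\bar g\,d\mu=\int_\Omega fu E(w\bar g)\,d\mu$ obtained from the averaging and pull-out properties of $E$. You are in fact more careful than the paper, which stops at this identity for $g\in\mathcal{D}(T^{\ast})$; your additional verification of the reverse inclusion $T^{\ast}\subseteq M_{\bar u}EM_{\bar w}$ via indicator functions from the core $L^p(\nu)$ of Lemma \ref{l1} fills a step the paper leaves implicit.
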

\begin{proof}
Let $f\in \mathcal{D}(T)$  and $g\in \mathcal{D}(T^{\ast})$. So we have
\begin{align*}
\langle Tf,g\rangle&=\int_{\Omega}wE(uf)\bar{g}d\mu\\
&=\int_{\Omega}fuE(w\bar{g})d\mu\\
&=\langle f,M_{\bar{u}}EM_{\bar{w}}g\rangle.
\end{align*}
Hence $T^{\ast}=M_{\bar{u}}EM_{\bar{w}}$.
\end{proof}
Now we prove that every densely defined WCT operator is closed.
\begin{prop} \label{p2} If $(E(|u|^q))^{\frac{p}{q}}E(|w|^p)<\infty$ a.e., $\mu$. Then the linear transformation $M_wEM_u:\mathcal{D}(M_wEM_u)\rightarrow L^p(\Sigma)$ is closed.
\end{prop}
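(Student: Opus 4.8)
The plan is to recognize $M_wEM_u$ as the adjoint of a densely defined operator, using that adjoints between Banach spaces are automatically closed; I will also indicate how the closed‑graph property can be checked by hand.

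First I would note that the hypothesis $(E(|u|^q))^{\frac{p}{q}}E(|w|^p)<\infty$ a.e. is exactly condition (ii) of Theorem \ref{t2}, so $T:=M_wEM_u$ is densely defined on $L^p(\Sigma)$. Raising this inequality to the power $\frac{q}{p}$ shows it is equivalent to $E(|u|^q)(E(|w|^p))^{\frac{q}{p}}<\infty$ a.e., which is condition (ii) of Theorem \ref{t2} for the operator $M_{\bar u}EM_{\bar w}$ on $L^q(\Sigma)$ (with the roles of $p$ and $q$ interchanged); hence $S:=M_{\bar u}EM_{\bar w}$ is densely defined on $L^q(\Sigma)$. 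Applying Proposition \ref{p1} to $S$ then identifies its adjoint, $S^{\ast}=M_{\bar{\bar w}}EM_{\bar{\bar u}}=M_wEM_u=T$, as operators from $(L^q(\Sigma))^{\ast}=L^p(\Sigma)$ to itself, domains included. Since the graph of the adjoint of any densely defined operator is an intersection of kernels of continuous (indeed weak-$\ast$ continuous) functionals on $L^p(\Sigma)\times L^p(\Sigma)$, it is closed; therefore $T=S^{\ast}$ is closed.

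Alternatively I would argue directly from the sequential criterion. Take $f_n\in\mathcal{D}(T)$ with $f_n\to f$ and $Tf_n=wE(uf_n)\to g$ in $L^p(\mu)$. Passing to a subsequence I may assume $f_n\to f$ a.e.; moreover $\int_\Omega E(|f_n-f|^p)\,d\mu=\|f_n-f\|_p^p\to 0$, so $E(|f_n-f|^p)\to 0$ in $L^1(\mu)$, hence along a further subsequence $E(|f_n-f|^p)\to 0$ a.e. Since $S(w)=S(|w|^p)\subseteq S(E(|w|^p))$ and the hypothesis forces $E(|u|^q)<\infty$ on $S(E(|w|^p))$, the conditional H\"older inequality gives $|E(uf_n)-E(uf)|\leq E(|u|^q)^{\frac{1}{q}}E(|f_n-f|^p)^{\frac{1}{p}}\to 0$ a.e. on $S(w)$, where $E(uf)$ is itself finite a.e. because $E(|uf|)\leq E(|u|^q)^{\frac{1}{q}}E(|f|^p)^{\frac{1}{p}}<\infty$ there. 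Multiplying by $w$ and comparing with $wE(uf_n)\to g$ a.e. yields $wE(uf)=g$ a.e. on $S(w)$, while off $S(w)$ both sides vanish. Thus $wE(uf)=g\in L^p(\mu)$, so $f\in\mathcal{D}(T)$ and $Tf=g$, i.e. $\mathcal{G}(T)$ is closed.

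The genuinely delicate points are the measure‑theoretic bookkeeping: checking that $E(uf)$ and $E(uf_n)$ are defined and finite precisely on the set $S(w)$ where their pointwise values are used — which is what the hypothesis together with $S(w)\subseteq S(E(|w|^p))$ secures — and, for the adjoint route, that Proposition \ref{p1} is being read as identifying $\mathcal{D}(S^{\ast})$ with the maximal domain $\{f:wE(uf)\in L^p\}$, not merely as giving the formula for $S^\ast$ on $\mathcal{D}(S^{\ast})$.
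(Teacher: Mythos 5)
Your proposal is correct, and it in fact contains the paper's own argument as its first half. The paper proves closedness by exactly the weak-pairing computation you describe: for $f_n\to f$ and $wE(uf_n)\to g$ it checks $\langle f, M_{\bar u}EM_{\bar w}h\rangle=\langle g,h\rangle$ for all $h\in\mathcal{D}(M_{\bar u}EM_{\bar w})$ and then asserts that this forces $f\in\mathcal{D}(M_wEM_u)$ with $wE(uf)=g$; that last inference is precisely the statement that $M_wEM_u$, \emph{with its maximal domain}, equals the adjoint of the densely defined operator $M_{\bar u}EM_{\bar w}$ on $L^q$ --- the delicate point you single out at the end, and which the paper leaves tacit. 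So your first route is the paper's proof made explicit (and honest about where Proposition \ref{p1} must be read as an equality of operators including domains, not just a formula on $\mathcal{D}(S^\ast)$). Your second, sequential route is genuinely different and arguably preferable here: by extracting a.e.-convergent subsequences of $f_n$, of $E(|f_n-f|^p)$, and of $wE(uf_n)$, and using the conditional H\"older inequality on $S(w)\subseteq S(E(|w|^p))$ where the hypothesis guarantees $E(|u|^q)<\infty$, you verify the graph condition directly. This buys a self-contained proof that does not depend on the adjoint identification or on duality of $L^p$ at all, at the cost of some measure-theoretic bookkeeping (which you carry out correctly, including the observation that $g$ vanishes off $S(w)$ because each $wE(uf_n)$ does).
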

\begin{proof}
  Assume that $f_n\in \mathcal{D}(M_{w}EM_{u})$, $f_n\rightarrow f$, $wE(uf_n)\rightarrow g$, and let $h\in  \mathcal{D}(M_{\bar{u}}EM_{\bar{w}})$. Then

$$\langle f,M_{\bar{u}}EM_{\bar{w}}h\rangle=\lim_{n\rightarrow \infty}\langle f_n,M_{\bar{u}}EM_{\bar{w}}h\rangle$$

$$=\lim_{n\rightarrow \infty}\langle wE(uf_n),h\rangle=\langle g,h\rangle.$$

This calculation ( which uses the continuity of the inner product and the fact that $f_n\in \mathcal{D}(M_{w}EM_{u})$) shows that $f\in \mathcal{D}(M_{w}EM_{u})$ and $wE(uf)=g$, as required.
\end{proof}
In the next theorem we get that if WCT operator $M_wEM_u$ is densely defined, then it is continuous if and only it is every where defined.

\begin{thm}\label{t3}
If $(E(|u|^q))^{\frac{p}{q}}E(|w|^p)<\infty$ a.e., $\mu$. Then the WCT operator  $M_wEM_u:\mathcal{D}(M_wEM_u)\rightarrow L^p(\Sigma)$ is continuous if and only if it is every where defined i.e., $\mathcal{D}(M_wEM_u)=L^p(\Sigma)$.
\end{thm}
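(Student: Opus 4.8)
The plan is to obtain both implications from the closed graph theorem, exploiting the fact that under the standing hypothesis $M_wEM_u$ is automatically both densely defined and closed. Indeed, the assumption $(E(|u|^q))^{\frac{p}{q}}E(|w|^p)<\infty$ a.e.\ $\mu$ is precisely condition (ii) of Theorem \ref{t2}, so that theorem guarantees $\mathcal{D}(M_wEM_u)$ is dense in $L^p(\Sigma)$; and Proposition \ref{p2} guarantees, under exactly the same hypothesis, that $M_wEM_u$ is a closed operator. These two observations are the only structural inputs I would need.

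For the direction ``everywhere defined $\Rightarrow$ continuous'', suppose $\mathcal{D}(M_wEM_u)=L^p(\Sigma)$. Then $M_wEM_u$ is a closed linear operator defined on all of the Banach space $L^p(\Sigma)$ and taking values in $L^p(\Sigma)$, so the closed graph theorem applies and yields that $M_wEM_u$ is bounded, hence continuous. For the converse, write $T=M_wEM_u$ and assume $T$ is continuous on $\mathcal{D}(T)$, say $\|Tf\|_p\le C\|f\|_p$ for all $f\in\mathcal{D}(T)$. I would first show $\mathcal{D}(T)$ is closed in $L^p(\Sigma)$: if $f_n\in\mathcal{D}(T)$ and $f_n\to f$ in $L^p(\Sigma)$, then $(f_n)$ is Cauchy, so by the norm estimate $(Tf_n)$ is Cauchy and converges to some $g\in L^p(\Sigma)$; closedness of $T$ (Proposition \ref{p2}) then forces $f\in\mathcal{D}(T)$ (with $Tf=g$). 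Thus $\mathcal{D}(T)$ is a closed subspace of $L^p(\Sigma)$, and since it is also dense by Theorem \ref{t2}, it must be all of $L^p(\Sigma)$.

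This argument is really just the correct packaging of standard facts, so I do not anticipate a genuine obstacle; the points that need care are (a) invoking Theorem \ref{t2} to know the operator is densely defined — which is what makes ``closed and bounded'' upgrade to ``everywhere defined'' — and (b) checking the hypotheses of the closed graph theorem, namely that both the domain space and the target space are Banach, which holds for $L^p(\Sigma)$ here. One could avoid citing Proposition \ref{p2} in the forward implication by instead extending the continuous densely defined operator to the closure of its domain and verifying the extension still acts as $f\mapsto wE(uf)$, but routing everything through closedness plus the closed graph theorem is the cleaner route.
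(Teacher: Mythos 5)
Your proposal is correct and follows essentially the same route as the paper: closedness of $M_wEM_u$ plus the continuity estimate shows the domain is closed, hence (by density from Theorem \ref{t2}) equal to $L^p(\Sigma)$, and the converse is the closed graph theorem. You merely fill in the details the paper leaves implicit, and you cite Proposition \ref{p2} for closedness where the paper (apparently by a slip) points to Lemma \ref{l1}.
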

\begin{proof} Let $M_wEM_u$ be continuous. By Lemma \ref{l1} it is closed. Hence easily we get that $\mathcal{D}(M_wEM_u)$ is closed and so $\mathcal{D}(M_wEM_u)=L^p(\Sigma)$. The converse is easy by closed graph theorem.
\end{proof}
We denote the range of the operator $T$ as $\mathcal{R}(T)$ i.e., $\mathcal{R}(T)=\{T(x): x\in \mathcal{D}(T)\}$.

\begin{prop} If $E(|u|^2)E(|w|^2)<\infty$ a.e., $\mu$ and $M_wEM_u:\mathcal{D}(M_wEM_u)\subset L^2(\Sigma)\rightarrow L^2(\Sigma)$, then $\mathcal{R}(M_wEM_u)$ is closed if and only if $\mathcal{R}(M_{\bar{u}}EM_{\bar{w}})$ is closed.
\end{prop}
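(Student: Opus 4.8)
The plan is to reduce the statement to the closed range theorem for densely defined closed operators. First I would observe that, since here $p=q=2$, the standing hypothesis $E(|u|^2)E(|w|^2)<\infty$ a.e.\ $\mu$ is exactly the condition $E(|w|^p)^{\frac1p}E(|u|^q)^{\frac1q}<\infty$ a.e.\ of Theorem \ref{t1}; hence $T:=M_wEM_u$ is densely defined on $L^2(\Sigma)$. The same hypothesis makes $T$ closed by Proposition \ref{p2}, and by Proposition \ref{p1} its adjoint is $T^{\ast}=M_{\bar u}EM_{\bar w}$. This adjoint is again densely defined (apply Theorem \ref{t1} to $\bar u,\bar w$: their conditional expectations satisfy the same bound because $E(|\bar u|^2)=E(|u|^2)$ and $E(|\bar w|^2)=E(|w|^2)$) and again closed by Proposition \ref{p2}; in particular $T^{\ast\ast}=T$.

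Next I would invoke the closed range theorem: for a densely defined closed operator $T$ between Banach spaces, $\mathcal{R}(T)$ is closed in $L^2(\Sigma)$ if and only if $\mathcal{R}(T^{\ast})$ is closed in $(L^2(\Sigma))^{\ast}=L^2(\Sigma)$. Since $T^{\ast}=M_{\bar u}EM_{\bar w}$, this is precisely the claimed equivalence: $\mathcal{R}(M_wEM_u)$ is closed if and only if $\mathcal{R}(M_{\bar u}EM_{\bar w})$ is closed.

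The only point needing care is checking that all hypotheses of the closed range theorem genuinely hold here, i.e.\ that $T$ is densely defined (so that $T^{\ast}$ is a well-defined operator) and closed (so that $T^{\ast\ast}=T$ and the statement is symmetric in $T$ and $T^{\ast}$); as noted, both follow from $E(|u|^2)E(|w|^2)<\infty$ a.e.\ together with Theorem \ref{t1} and Proposition \ref{p2}. I would also remark, for the reader who prefers a self-contained Hilbert-space argument, that one can instead use the orthogonal decompositions $L^2(\Sigma)=\overline{\mathcal{R}(T)}\oplus\ker(T^{\ast})$ and $L^2(\Sigma)=\overline{\mathcal{R}(T^{\ast})}\oplus\ker(T)$ valid for a closed densely defined $T$, and then derive the equivalence of closedness of the two ranges from the closed graph theorem applied to the partial inverses; but the shortest route is simply to cite the closed range theorem as above.
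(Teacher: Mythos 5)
Your argument is correct, but it takes a genuinely different route from the paper. You check that with $p=q=2$ the hypothesis $E(|u|^2)E(|w|^2)<\infty$ a.e.\ is exactly the condition of Theorem \ref{t1} and Proposition \ref{p2}, so that $T=M_wEM_u$ is densely defined and closed, identify $T^{\ast}=M_{\bar u}EM_{\bar w}$ via Proposition \ref{p1}, and then cite Banach's closed range theorem: for a closed densely defined operator, $\mathcal{R}(T)$ is closed if and only if $\mathcal{R}(T^{\ast})$ is closed. The paper instead works in the graph space $L^2(\Sigma)\times L^2(\Sigma)$: it takes $P$ to be the orthogonal projection onto $\mathcal{G}(M_wEM_u)$ and $Q$ the canonical projection onto $\{0\}\times L^2(\Sigma)$, identifies $\mathcal{R}(M_wEM_u)$ with $\mathcal{R}(QP)$ and $\mathcal{R}(M_{\bar u}EM_{\bar w})$ with $\mathcal{R}((I-Q)(I-P))$, and appeals to the two-projections fact that $\mathcal{R}(QP)$ is closed if and only if $\mathcal{R}((I-Q)(I-P))$ is. The two arguments are close cousins, since the graph-projection picture is essentially how one proves the Hilbert-space closed range theorem; but your version makes explicit the hypotheses the paper uses tacitly --- closedness of $\mathcal{G}(T)$ (needed for $P$ to exist, i.e.\ Proposition \ref{p2}) and the identification of the second range with $\mathcal{R}(T^{\ast})$ for $T^{\ast}=M_{\bar u}EM_{\bar w}$ (i.e.\ dense definedness plus Proposition \ref{p1}). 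What the paper's approach buys is a self-contained reduction to projection geometry rather than quoting the closed range theorem as a black box, at the price of leaving the two-projections lemma and the range identifications unproved; your approach is shorter and more transparent, provided the reader accepts the closed range theorem and the equality (not just inclusion) $T^{\ast}=M_{\bar u}EM_{\bar w}$ asserted in Proposition \ref{p1}.
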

\begin{proof}
Let $P:L^2(\Sigma)\times L^2(\Sigma)\rightarrow \mathcal{G}(M_wEM_u)$ be a projection and $Q:L^2(\Sigma)\times L^2(\Sigma)\rightarrow \{0\}\times L^2(\Sigma)$ be the canonical projection. It is clear that $\mathcal{R}(M_wEM_u)\cong\mathcal{R}(QP)$. Also, $\mathcal{R}(M_{\bar{u}}EM_{\bar{w}})\cong\mathcal{R}((I-Q)(I-P))$. Since $P$ and $Q$ are orthogonal projections, then $\mathcal{R}(QP)$ is closed if and only if $\mathcal{R}((I-Q)(I-P))$. Thus we obtain the desired result.
\end{proof}
It is well-known that for a densely defined closed operator $T$ of $\mathcal{H}_1$ into $\mathcal{H}_2$, there exists a partial isometry $U_T$ with initial space $\mathcal{N}(T)^{\perp}=\overline{\mathcal{R}(T^{\ast})}=\overline{\mathcal{R}(|T|)}$
and final space $\mathcal{N}(T^{\ast})^{\perp}=\overline{\mathcal{R}(T)}$ such that $$T=U_T|T|.$$
\begin{thm}
Suppose that  $\mathcal{D}(M_wEM_u)$
is dense in $L^2(\Sigma)$. Let $M_wEM_u=U|M_wEM_u|$ be the polar decomposition of $M_wEM_u$. Then\\

(i) $|M_wEM_u|=M_{u'}EM_u$, where $u'=(\frac{E(|w|^2)}{E(|u|^2)})^{\frac{1}{2}}.\chi_{S}.\bar{u}$ and $S=S(E(|u|^2))$,\\

(ii) $U=M_{w'}EM_{u}$, where $w':\Omega\rightarrow \mathbb{C}$ is an a.e. $\mu$ well-defined $\Sigma$-measurable function such that
$$w'=\frac{w}{(E(|w|^2)E(|u|^2))^{\frac{1}{2}}}.\chi_{S\cap G},$$
in which $G=S(E(|w|^2))$.
\end{thm}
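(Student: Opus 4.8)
The plan is to compute $|M_wEM_u| = (T^*T)^{1/2}$ directly, where $T = M_wEM_u$, and then read off the partial isometry from $T = U|T|$. By Proposition \ref{p1} we have $T^* = M_{\bar u}EM_{\bar w}$, so for $f \in L^2(\Sigma)$ in a suitable core (the set $L^2(\nu)$ from Lemma \ref{l1}, on which both $T$ and $T^*$ act boundedly),
$$
T^*Tf = M_{\bar u}EM_{\bar w}\bigl(wE(uf)\bigr) = \bar u\, E\bigl(|w|^2 E(uf)\bigr) = \bar u\, E(|w|^2)\, E(uf),
$$
using that $E(uf)$ is $\mathcal{A}$-measurable. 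So $T^*T = M_{\bar u E(|w|^2)}EM_u$. The candidate $A := M_{u'}EM_u$ with $u' = (E(|w|^2)/E(|u|^2))^{1/2}\chi_S\bar u$ should satisfy $A^2 = T^*T$; the key computation is
$$
A^2 f = M_{u'}EM_u\bigl(u'E(uf)\bigr) = u'\, E(|u|^2 u'\text{-factor})\cdots = u'\, E\!\left(|u|^2 \Bigl(\tfrac{E(|w|^2)}{E(|u|^2)}\Bigr)^{1/2}\chi_S\right)E(uf),
$$
and since $(E(|w|^2)/E(|u|^2))^{1/2}\chi_S$ is $\mathcal{A}$-measurable it pulls out of the inner $E$, leaving $E(|u|^2)\chi_S$, which cancels the denominator to give $u'\,(E(|w|^2)E(|u|^2))^{1/2}\chi_S\, E(uf) = \bar u E(|w|^2)E(uf) = T^*Tf$ (using $S(E(|u|^2)) \supseteq S(E(|w|^2)E(|u|^2))$ appropriately, and that on $S^c$ both sides vanish). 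Then I must check $A \geq 0$, i.e. $\langle Af, f\rangle \geq 0$: indeed $\langle Af,f\rangle = \int u' E(uf)\bar f\,d\mu = \int (E(|w|^2)/E(|u|^2))^{1/2}\chi_S |E(uf)|^2\,d\mu \geq 0$, so by uniqueness of the positive square root, $|T| = A$.

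For part (ii), once $|T| = M_{u'}EM_u$ is established, I define $U$ on $\mathcal{R}(|T|)$ by $U(|T|f) = Tf$ and extend by zero on $\overline{\mathcal{R}(|T|)}^\perp$; I then verify the claimed formula $U = M_{w'}EM_u$ by checking the two agree on elements of the form $|T|f$. Concretely, $M_{w'}EM_u(|T|f) = M_{w'}EM_u\bigl(u'E(uf)\bigr) = w'\,E(|u|^2 u')\,E(uf)$ — wait, more carefully $= w' E(u\cdot u' E(uf)) = w' E(u u')E(uf)$, and $uu' = (E(|w|^2)/E(|u|^2))^{1/2}\chi_S|u|^2$, so the inner expectation gives $(E(|w|^2)/E(|u|^2))^{1/2}\chi_S E(|u|^2) = (E(|w|^2)E(|u|^2))^{1/2}\chi_S$; multiplying by $w' = w\chi_{S\cap G}/(E(|w|^2)E(|u|^2))^{1/2}$ yields $w\chi_{S\cap G}E(uf)$. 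I then argue this equals $wE(uf) = Tf$ because on $S(E(uf))$ one has (by the support property $S(E(|w|^2)) \supseteq$ relevant sets, or rather because $E(uf) = 0$ off $S = S(E(|u|^2))$ and $wE(uf) = 0$ off $G = S(E(|w|^2))$) the factor $\chi_{S\cap G}$ is redundant. I also need $U$ to vanish on $\mathcal{N}(|T|) = \mathcal{N}(T)$, which follows since $M_{w'}EM_u f$ involves $E(uf)$, and $f \in \mathcal{N}(T)$ forces $wE(uf) = 0$, hence $w'E(uu')E(uf)$-type expression vanishes on the support where $w'$ lives.

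The remaining routine verifications are: $M_{w'}EM_u$ is a partial isometry with the correct initial space $\mathcal{N}(T)^\perp = \overline{\mathcal{R}(|T|)}$ and final space $\overline{\mathcal{R}(T)}$ — these follow from the general polar decomposition theory quoted before the theorem statement once uniqueness pins down $U$, so I would not belabor them. I would also need to confirm that $w'$ and $u'$ are genuinely a.e.-defined $\Sigma$-measurable functions: the only issue is division by $E(|u|^2)$ or by $(E(|w|^2)E(|u|^2))^{1/2}$, which is resolved by the cutoffs $\chi_S$ and $\chi_{S\cap G}$ restricting to where the denominators are strictly positive, and by the hypothesis $E(|u|^2)E(|w|^2) < \infty$ a.e.

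The main obstacle I anticipate is the careful bookkeeping of supports — ensuring that all the cancellations ($E(|u|^2)$ against its reciprocal, and the disappearance of $\chi_{S\cap G}$) are valid a.e. $\mu$ and not just on the "good" set, and in particular verifying that $T$, $T^*T$, $|T|$, and $U$ all vanish off $S$ (equivalently off $S(E(|u|^2))$, since $E(uf)$ does) so that the formulas are consistent everywhere. The algebra itself is a routine application of the pull-out property $E(fg) = gE(f)$ for $\mathcal{A}$-measurable $g$, but one must be disciplined about which factors are $\mathcal{A}$-measurable (namely $E(|w|^2)$, $E(|u|^2)$, $\chi_S$, $\chi_G$, and hence $u'/\bar u$ and the scalar in $w'$) at each step.
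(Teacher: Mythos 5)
Your proposal is correct and follows essentially the same route as the paper: part (i) is settled by exhibiting $M_{u'}EM_u$ as a positive operator whose square is $T^{\ast}T$ (the paper phrases this equivalently as $\|M_{u'}EM_u f\|=\|Tf\|=\||T|f\|$ together with positivity and the domain identification from Lemma \ref{l1}), and part (ii) by checking that $M_{w'}EM_u$ reproduces $Tf=wE(uf)$ on $\mathcal{R}(|T|)$ and vanishes on $\mathcal{N}(T)$, using the support inclusions $S(u)\subseteq S$, $S(w)\subseteq G$, $S(E(uf))\subseteq S$ to discard the cutoff $\chi_{S\cap G}$. Your bookkeeping of the $\mathcal{A}$-measurable factors and of the supports is exactly the content of the paper's (terser) argument, and the one point both treatments leave implicit is the same: that the positive symmetric operator $M_{u'}EM_u$ is actually self-adjoint on the identified domain, which is what licenses the uniqueness-of-square-root conclusion in the unbounded setting.
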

\begin{proof}
 (i). For every $f\in \mathcal{D}(M_{u'}EM_u)$ we have
\begin{align*}
\|M_{u'}EM_u(f)(f)\|^2=\||M_wEM_u|(f)\|^2.
\end{align*}
Also, by Lemma \ref{l1} we conclude that $\mathcal{D}(M_{u'}EM_u)=\mathcal{D}(|M_wEM_u|)$ and it is easily seen that $M_{u'}EM_u$ is a positive operator. These observations imply that $|M_wEM_u|=M_{u'}EM_u$.\\
(ii). For $f\in L^2(\Sigma)$ we have
$$\int_{\Omega}|w'E(uf)|^2d\mu=\int_{\Omega}\frac{\chi_{S\cap G}}{E(|w|^2)E(|u|^2)}|wE(uf)|^2d\mu,$$
which implies that the operator $M_{w'}EM_{u}$ is well-defined and $\mathcal{N}(M_wEM_u)=\mathcal{N}(M_{w'}EM_{u})$. Also, for $f\in \mathcal{D}(M_wEM_u)\ominus \mathcal{N}(M_wEM_u)$ we have
\begin{align*}
U(|M_wEM_u|(f))=wE(uf).\chi_{S\cap G}=wE(uf).
\end{align*}
Thus $\|U(f)\|=\|f\|$ for all $f\in \mathcal{R}(|M_wEM_u|)$ and since $U$ is a contraction, then it holds for all $f\in \mathcal{N}(M_wEM_u)^{\perp}=\overline{\mathcal{R}(|M_wEM_u|)}$.
\end{proof}
Here we remind that: if $T:\mathcal{D}(T)\subset X\rightarrow X$ is a closed linear operator on the Banach space $X$, then a complex number $\lambda$ belongs  to the resolvent set $\rho(T)$ of $T$, if the operator $\lambda I-T$ has a bounded everywhere on $X$ defined inverse $(\lambda I-T)^{-1}$, called the resolvent of $T$ at $\lambda$ and denoted by $R_{\lambda}(T)$. The set $\sigma(T):=\mathbb{C}\setminus \rho(T)$ is called the spectrum of the operator $T$.\\
It is known that, if $a,b$ are elements of a unital algebra $A$, then $1-ab$ is invertible if and only if $1-ba$ is invertible. A consequence of this equivalence is that $\sigma(ab)\setminus \{0\}=\sigma(ba)\setminus \{0\}$.  Now, in the next theorem we compute the spectrum of WCT operator $M_wEM_u$ as a densely defined operator on $L^2(\Sigma)$.

\begin{prop}\label{p4} Let $M_wEM_u$ be densely defined and $\mathcal{A}\varsubsetneq\Sigma$, then\\

(i) $\operatorname{ess range(E(uw))}\setminus \{0\}\subseteq \sigma(M_wEM_u)$,\\

(ii) If $L^2(\mathcal{A})\subseteq \mathcal{D}(EM_{uw})$, then $\sigma(M_wEM_u)\setminus \{0\}\subseteq \operatorname{ess range(E(uw))}\setminus \{0\}$.
\end{prop}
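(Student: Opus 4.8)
The plan is to pin down $\sigma(M_wEM_u)$ by comparison with the spectrum of the multiplication operator $M_{E(uw)}$ acting on $L^2(\mathcal{A})$. Write $\phi=E(uw)$, an $\mathcal{A}$-measurable function, and $T=M_wEM_u$. Recall that the standing hypothesis ``$T$ densely defined'' means, by Theorem~\ref{t2}, that $E(|w|^2)E(|u|^2)<\infty$ a.e., and that $T$ is then closed by Proposition~\ref{p2}; hence $\lambda\in\rho(T)$ iff $\lambda I-T$ maps $\mathcal{D}(T)$ bijectively onto $L^2(\Sigma)$. The heuristic behind both inclusions is that, grouping $T=M_w\cdot(EM_u)$, the reversed product $(EM_u)M_w=EM_{uw}$ restricts on $L^2(\mathcal{A})$ to $M_\phi$, so the principle $\sigma(ab)\setminus\{0\}=\sigma(ba)\setminus\{0\}$ recalled before the proposition, together with the classical identity $\sigma(M_\phi)=\operatorname{ess\,range}(\phi)$ for a multiplication operator, should give $\sigma(T)\setminus\{0\}=\operatorname{ess\,range}(\phi)\setminus\{0\}$. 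The two parts split because this principle is delicate for unbounded operators with an unbounded intertwiner; the hypothesis $\mathcal{A}\varsubsetneq\Sigma$ only serves the bookkeeping around $0$ (so that $0\in\sigma(T)$ and the content lies away from $0$). Accordingly I would prove (i) by an explicit construction and use the extra hypothesis of (ii) precisely to make the intertwining rigorous in that direction.

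For (i): given $0\neq\lambda\in\operatorname{ess\,range}(\phi)$, I would exhibit approximate eigenvectors. Using the $\sigma$-finiteness of $\mu|_{\mathcal{A}}$ and $E(|w|^2),E(|u|^2)<\infty$ a.e., pick for each $n$ a set $A_n\in\mathcal{A}$ with $0<\mu(A_n)<\infty$, with $\abs{\phi-\lambda}<\eps_n$ on $A_n$ for some $\eps_n\to0$, $\eps_n<\abs{\lambda}/2$, and (after intersecting with $\{E(|w|^2)\le m_n\}$ for $m_n$ large) with $0<\int_{A_n}E(|w|^2)\,d\mu<\infty$. Since $\eps_n<\abs{\lambda}/2$ makes $\abs{\phi}>\abs{\lambda}/2$ on $A_n$ while $\abs{\phi}^2\le E(|u|^2)E(|w|^2)$, the set $A_n$ meets $S(w)$ in positive measure, so $c_n:=\norm{w\chi_{A_n}}_2$ is finite and nonzero. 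Put $f_n=w\chi_{A_n}/c_n$, a unit vector. From $E(uw\chi_{A_n})=\chi_{A_n}\phi$ one gets $Tf_n=\phi f_n$ (with $Tf_n\in L^2$ since $\abs{\phi}$ is bounded on $A_n$, so $f_n\in\mathcal{D}(T)$), hence $(\lambda I-T)f_n=(\lambda-\phi)f_n$, whose norm is at most $\sup_{A_n}\abs{\lambda-\phi}\le\eps_n\to0$. Thus $\lambda$ lies in the approximate point spectrum of $T$, no everywhere-defined bounded inverse of $\lambda I-T$ can exist, and $\lambda\in\sigma(T)$.

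For (ii): assume $L^2(\mathcal{A})\subseteq\mathcal{D}(EM_{uw})$. Since $E(uwg)=\phi g$ for $\mathcal{A}$-measurable $g$, this says $M_\phi$ maps $L^2(\mathcal{A})$ into itself, so $\phi\in L^\infty(\mathcal{A})$ by the closed graph theorem; this is the point that lets the $ab$/$ba$ comparison be carried out honestly. Let $0\neq\lambda\notin\operatorname{ess\,range}(\phi)$ and set $\psi:=(\lambda-\phi)^{-1}\in L^\infty(\mathcal{A})$. First, $\lambda I-T$ is injective: if $f\in\mathcal{D}(T)$ and $wE(uf)=\lambda f$, applying $EM_u$ and using $E(uwE(uf))=\phi E(uf)$ gives $(\lambda-\phi)E(uf)=0$, so $E(uf)=0$ and then $\lambda f=0$. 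Next, the equation $\lambda f-wE(uf)=h$ forces, after applying $EM_u$, $E(uf)=\psi E(uh)$, hence $f=\tfrac1\lambda h+\tfrac1\lambda M_{w\psi}EM_u h$; so I would set $R_\lambda:=\tfrac1\lambda I+\tfrac1\lambda M_{w\psi}EM_u$ and check that $R_\lambda$ is bounded and everywhere defined on $L^2(\Sigma)$, that $\mathcal{R}(R_\lambda)\subseteq\mathcal{D}(T)$, and that $R_\lambda$ is a two-sided inverse of $\lambda I-T$; this gives $\lambda\in\rho(T)$, which is (ii).

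The main obstacle is precisely this last verification. By the bounded theory of \cite{ej}, boundedness of $M_{w\psi}EM_u$ on $L^2(\Sigma)$ is governed by $\abs{\psi}\bigl(E(|w|^2)E(|u|^2)\bigr)^{1/2}$; here $\abs{\psi}$ is bounded, but $\bigl(E(|w|^2)E(|u|^2)\bigr)^{1/2}$ is only finite a.e.\ under the standing hypothesis. So the real work of (ii) is to extract from $L^2(\mathcal{A})\subseteq\mathcal{D}(EM_{uw})$ (equivalently $\phi\in L^\infty$) enough control to force $M_{w\psi}EM_u$ to be bounded and everywhere defined — that is, to run the unbounded-operator version of ``$1-ab$ invertible $\iff$ $1-ba$ invertible'' with an unbounded intertwiner $EM_u$, keeping the domains straight throughout. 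That domain-and-norm bookkeeping, rather than any isolated estimate, is where I expect the difficulty to sit.
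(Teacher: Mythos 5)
Your part (i) is correct, and it takes a genuinely different route from the paper: the paper proves the whole proposition in one line by invoking $\sigma(ab)\setminus\{0\}=\sigma(ba)\setminus\{0\}$ to pass to $EM_{uw}$ and then citing Theorem 2.8 of \cite{es}, whereas you construct approximate eigenvectors $f_n=w\chi_{A_n}/c_n$ directly. Your argument is self-contained, the normalization $c_n\in(0,\infty)$ is justified correctly via $|E(uw)|^2\le E(|u|^2)E(|w|^2)$, and you in fact prove the stronger statement that $\operatorname{ess\,range}(E(uw))\setminus\{0\}$ lies in the approximate point spectrum.

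Part (ii) is not complete, and the step you defer is the whole content of the claim. You reduce (ii) to the boundedness of $M_{w\psi}EM_u$ with $\psi=(\lambda-E(uw))^{-1}\in L^\infty(\mathcal{A})$, but by the criterion you quote this requires $|\psi|\bigl(E(|w|^2)E(|u|^2)\bigr)^{1/2}\in L^\infty$, while the hypotheses only give finiteness a.e.\ of $E(|w|^2)E(|u|^2)$. This is not repairable bookkeeping. Take $\Omega=\mathbb{N}\times\{1,2\}$ with $\mu(\{(n,i)\})=1$, let $\mathcal{A}$ be generated by the pairs $P_n=\{(n,1),(n,2)\}$, and put $u\equiv n$ on $P_n$ and $w=(n,-n)$ on $P_n$. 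Then $E(|u|^2)E(|w|^2)=n^4<\infty$ pointwise, so $T=M_wEM_u$ is densely defined; $E(uw)=0$, so $L^2(\mathcal{A})\subseteq\mathcal{D}(EM_{uw})$ and $\operatorname{ess\,range}(E(uw))\setminus\{0\}=\emptyset$. Yet on $P_n$ the operator $T$ acts as the nilpotent matrix $\tfrac{n^2}{2}\bigl(\begin{smallmatrix}1&1\\-1&-1\end{smallmatrix}\bigr)$, so for $\lambda\neq0$ the algebraic inverse of $\lambda I-T$ is $\lambda^{-1}I+\lambda^{-2}T$, which is unbounded; hence $\sigma(T)\setminus\{0\}=\mathbb{C}\setminus\{0\}$. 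So your candidate resolvent $R_\lambda$ genuinely fails to be bounded, and no amount of domain bookkeeping starting from the stated hypotheses can close the gap; the same example shows that the obstruction you flagged is exactly the point at which the paper's appeal to the bounded-algebra identity $\sigma(ab)\setminus\{0\}=\sigma(ba)\setminus\{0\}$ with the unbounded factor $EM_u$, together with Theorem 2.8 of \cite{es}, is insufficient as written. Under the additional hypothesis $E(|w|^2)E(|u|^2)\in L^\infty$ (equivalently, $T$ bounded), your $R_\lambda$ is bounded, everywhere defined, and a two-sided inverse, and your argument for (ii) then goes through.
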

\begin{proof} Since $\sigma(M_wEM_u)\setminus \{0\}=\sigma(EM_{uw})\setminus \{0\}$, then by using theorem 2.8 of \cite{es} we get the proof.
\end{proof}
By a similar method that we used in the proof of theorem 2.8 of \cite{es} we have the same assertion for the spectrum of the densely defined operator $EM_u$ on the space $L^p(\Sigma)$, i.e.,

(i) $\operatorname{ess range(E(u))}\cup\{0\}\subseteq \sigma(EM_u)$,\\

(ii) If $L^p(\mathcal{A})\subseteq \mathcal{D}(EM_u)$, then $\sigma(EM_u)\subseteq \operatorname{ess range(E(u))}\cup\{0\}$.\\
By these observations we have the next remark.
\begin{rem} Let $M_wEM_u$ be densely defined operator on $L^p(\Sigma)$ and $\mathcal{A}\varsubsetneq\Sigma$, then\\

(i) $\operatorname{ess range(E(uw))}\setminus \{0\}\subseteq \sigma(M_wEM_u)$,\\

(ii) If $L^p(\mathcal{A})\subseteq \mathcal{D}(EM_{uw})$, then $\sigma(M_wEM_u)\setminus \{0\}\subseteq \operatorname{ess range(E(uw))}\setminus \{0\}$.
\end{rem}
As we know the spectral radius of a densely defined operator $T$ is denoted by $r(T)$ and is defined as: $r(T)=\sup_{\lambda\in \sigma(T)} |\lambda|$. Hence we have the next corollary.
\begin{cor}
If the WCT operator $M_wEM_u$ is densely defined on $L^p(\Sigma)$ and $L^p(\mathcal{A})\subseteq \mathcal{D}(EM_{uw})$, then
$\sigma(M_wEM_u)\setminus \{0\}= \operatorname{ess range(E(uw))}\setminus \{0\}$ and $r(M_wEM_u)=\|E(uw)\|_{\infty}$.
\end{cor}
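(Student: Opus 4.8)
The plan is to read off the first equality directly from the preceding Remark and then convert it into the spectral-radius statement by an elementary argument about suprema. The two hypotheses in the corollary---$M_wEM_u$ densely defined on $L^p(\Sigma)$ and $L^p(\mathcal{A})\subseteq\mathcal{D}(EM_{uw})$---are exactly what is needed to invoke both parts of the Remark: part (i) gives $\operatorname{ess\,range}(E(uw))\setminus\{0\}\subseteq\sigma(M_wEM_u)\setminus\{0\}$ and part (ii) gives the reverse inclusion, so the first assertion $\sigma(M_wEM_u)\setminus\{0\}=\operatorname{ess\,range}(E(uw))\setminus\{0\}$ is immediate.

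For the spectral radius I would start from $r(M_wEM_u)=\sup_{\lambda\in\sigma(M_wEM_u)}|\lambda|$. The key elementary point is that inserting or deleting the single point $0$ from a subset of $\mathbb{C}$ does not change the supremum of $|\lambda|$ over that subset, unless the subset becomes empty. Assuming first that $\sigma(M_wEM_u)\setminus\{0\}\neq\emptyset$, I would write
\[
r(M_wEM_u)=\sup_{\lambda\in\sigma(M_wEM_u)\setminus\{0\}}|\lambda|=\sup_{\lambda\in\operatorname{ess\,range}(E(uw))\setminus\{0\}}|\lambda|=\sup_{\lambda\in\operatorname{ess\,range}(E(uw))}|\lambda|,
\]
and then identify the last quantity with $\|E(uw)\|_\infty$: since $\lambda_0\in\operatorname{ess\,range}(E(uw))$ precisely when $\mu(\{|E(uw)-\lambda_0|<\eps\})>0$ for every $\eps>0$, the supremum of $|\lambda|$ over the essential range of the $\mathcal{A}$-measurable function $E(uw)$ is exactly its essential supremum.

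Finally I would dispose of the degenerate case $\sigma(M_wEM_u)\setminus\{0\}=\emptyset$: by the first part this forces $\operatorname{ess\,range}(E(uw))\setminus\{0\}=\emptyset$, i.e.\ $E(uw)=0$ a.e.\ $\mu$, hence $\|E(uw)\|_\infty=0$, while $\sigma(M_wEM_u)\subseteq\{0\}$ gives $r(M_wEM_u)=0$, so the formula still holds. I expect this bookkeeping around the point $0$---reconciling whether $0$ lies in $\sigma(M_wEM_u)$ or in $\operatorname{ess\,range}(E(uw))$, and separating out the trivial case $E(uw)\equiv 0$---to be the only delicate point; everything else is a direct appeal to the Remark together with the standard description of the essential range in terms of the essential supremum.
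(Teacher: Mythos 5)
Your proposal is correct and follows exactly the route the paper intends: the paper offers no written proof, presenting the corollary as an immediate consequence of the two inclusions in the preceding Remark, with the spectral-radius formula read off from $r(T)=\sup_{\lambda\in\sigma(T)}|\lambda|$ and the identification of $\sup\{|\lambda|:\lambda\in\operatorname{ess\,range}(E(uw))\}$ with $\|E(uw)\|_\infty$. Your explicit handling of the point $0$ and of the degenerate case $E(uw)=0$ a.e.\ is a welcome addition that the paper omits.
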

A densely defined operator $T$  on the Hilbert space $\mathcal{H}$ is said to be {\it hyponormal} if $\mathcal{D}(T)\subseteq \mathcal{D}(T^{\ast})$ and $\|T^{\ast}(f)\|\leq \|T(f)\|$ for $f\in \mathcal{D}(T)$. A densely defined operator $T$  on the Hilbert space $\mathcal{H}$ is said to be {\it normal} if T is closed and $T^{\ast}T=TT^{\ast}$.
 For the WCT operator $T=M_wEM_u$ on $L^2(\Sigma)$ we have $T^{\ast}=M_{\bar{u}}EM_{\bar{w}}$ and we recall that $T$ is densely defined if and only if $T^{\ast}$ is densely defined. If $T$ is densely defined, then by the Lemma \ref{l1} we get that
 $L^2(\nu)\subseteq \mathcal{D}(T)$, $L^2(\nu)\subseteq \mathcal{D}(T^{\ast})$  and
$$\overline{L^2(\nu)}^{\|.\|_{\mu}}=\overline{\mathcal{D}(T)}^{\|.\|_{\mu}}=\overline{\mathcal{D}(T^{\ast})}^{\|.\|_{\mu}}=L^2(\mu),$$
in which $d\nu=Jd\mu$ and $J=1+E(|w|^2)E(|u|^2)$.
 Also, we have
 $T^{\ast}T=M_{E(|w|^2)\bar{u}}EM_u$ and $TT^{\ast}=M_{E(|u|^2)w}EM_{\bar{w}}$. Similarly, we have
$L^2(\nu')\subseteq \mathcal{D}(T^{\ast}T)$, $L^2(\nu')\subseteq \mathcal{D}(TT^{\ast})$  and
$$\overline{L^2(\nu')}^{\|.\|_{\mu}}=\overline{\mathcal{D}(T^{\ast}T)}^{\|.\|_{\mu}}=\overline{\mathcal{D}(TT^{\ast})}^{\|.\|_{\mu}}=L^2(\mu),$$
in which $d\nu'=J'd\mu$ and $J'=1+(E(|w|^2))^2(E(|u|^2))^2$. By these observations we have next assertions.
\begin{prop}\label{p27}
Let WCT operator $M_wEM_u$ be densely defined on $L^2(\Sigma)$. Then we have the followings:

(i) If $u(E(|w|^2))^{\frac{1}{2}}=\bar{w}(E(|u|^2))^{\frac{1}{2}}$ with respect to the measure $\mu$, then $T=M_wEM_u$ is normal.\\

(ii) If $T=M_wEM_u$ is normal, then $E(|w|^2)|E(u)|^2=E(|u|^2)|E(w)|^2$ with respect to the measure $\mu$.
\end{prop}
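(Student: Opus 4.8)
The plan is to read off normality of $T=M_wEM_u$ from the two formulas already recorded above, namely $T^{\ast}T=M_{E(|w|^2)\bar u}EM_u$ and $TT^{\ast}=M_{E(|u|^2)w}EM_{\bar w}$, and to reduce the whole statement to an a.e. identity between multipliers. First note that since $M_wEM_u$ is densely defined, Theorem \ref{t2} with $p=q=2$ gives $E(|w|^2)E(|u|^2)<\infty$ a.e.\ $\mu$, so Proposition \ref{p2} shows that $T$ is closed; hence in (i) it only remains to verify the algebraic identity $T^{\ast}T=TT^{\ast}$.

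For (i), abbreviate $a=E(|w|^2)$ and $b=E(|u|^2)$, both nonnegative and $\A$-measurable. From the hypothesis $u\,a^{1/2}=\bar w\,b^{1/2}$ a.e., I would multiply by an $f$ in the relevant domain and apply $E$; since $a^{1/2},b^{1/2}$ are $\A$-measurable they pull out of $E$, giving $a^{1/2}E(uf)=b^{1/2}E(\bar w f)$. Conjugating the hypothesis gives $\bar u\,a^{1/2}=w\,b^{1/2}$, whence
\begin{align*}
E(|w|^2)\,\bar u\,E(uf)&=\bigl(\bar u\,a^{1/2}\bigr)\bigl(a^{1/2}E(uf)\bigr)\\
&=\bigl(w\,b^{1/2}\bigr)\bigl(b^{1/2}E(\bar w f)\bigr)=E(|u|^2)\,w\,E(\bar w f)
\end{align*}
a.e.\ $\mu$. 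Since the multipliers producing $T^{\ast}Tf$ and $TT^{\ast}f$ agree pointwise a.e., the defining conditions ``$\in L^2(\mu)$'' for $\mathcal{D}(T^{\ast}T)$ and $\mathcal{D}(TT^{\ast})$ coincide and the operators act identically; so $T^{\ast}T=TT^{\ast}$ and $T$ is normal.

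For (ii), assume $T$ normal, so $T^{\ast}T=TT^{\ast}$ as operators, and test this equality on $\A$-measurable functions. Put $J'=1+(E(|w|^2))^2(E(|u|^2))^2$; by $\sigma$-finiteness of $(\Omega,\A,\mu)$ choose $\Omega_n\in\A$ with $\mu(\Omega_n)<\infty$ and $\Omega_n\nearrow\Omega$, and set $B_n=\Omega_n\cap\{t:J'(t)\le n\}\in\A$. Then $\nu'(B_n)=\int_{B_n}J'\,d\mu\le n\mu(B_n)<\infty$, so $\chi_{B_n}\in L^2(\nu')\subseteq\mathcal{D}(T^{\ast}T)\cap\mathcal{D}(TT^{\ast})$, and $\chi_{B_n}$ being $\A$-measurable, $E(u\chi_{B_n})=\chi_{B_n}E(u)$ and $E(\bar w\chi_{B_n})=\chi_{B_n}E(\bar w)$. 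Hence
$$T^{\ast}T(\chi_{B_n})=E(|w|^2)\,\bar u\,E(u)\,\chi_{B_n},\qquad TT^{\ast}(\chi_{B_n})=E(|u|^2)\,w\,E(\bar w)\,\chi_{B_n}.$$
Applying $E$ to these two equal functions and pulling the $\A$-measurable factors out of $E$ yields $E(|w|^2)|E(u)|^2\chi_{B_n}=E(|u|^2)|E(w)|^2\chi_{B_n}$; letting $n\to\infty$ (so $B_n\nearrow\Omega$) gives $E(|w|^2)|E(u)|^2=E(|u|^2)|E(w)|^2$ a.e.\ $\mu$.

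The main obstacle I anticipate is not the computation but the bookkeeping around domains and null sets: in (i) one must justify passing from ``equal multipliers'' to ``equal operators'' (legitimate because both $T^{\ast}T$ and $TT^{\ast}$ are being treated as WCT operators with their natural maximal domains), and in (ii) one must secure a rich enough supply of $\A$-measurable functions inside the core $L^2(\nu')$, which is precisely where $\sigma$-finiteness of the subalgebra $\A$ enters. A minor point to dispatch is the behaviour on $\{E(|u|^2)=0\}$ and $\{E(|w|^2)=0\}$: there $u$, respectively $w$, vanishes a.e.\ by the property $S(f)\subseteq S(E(f))$ for $f\ge0$, so all the identities above hold trivially on those sets.
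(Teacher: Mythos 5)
Your proof is correct, and it follows the same overall skeleton as the paper's (use the identifications $T^{\ast}T=M_{E(|w|^2)\bar u}EM_u$, $TT^{\ast}=M_{E(|u|^2)w}EM_{\bar w}$ from the preceding discussion, and for (ii) test on indicators of $\mathcal{A}$-sets of finite $\nu'$-measure), but the mechanism in part (i) is genuinely different. The paper works with the quadratic form: it computes
$$\langle (T^{\ast}T-TT^{\ast})f,f\rangle=\int_{\Omega}\bigl(|E(u(E(|w|^2))^{1/2}f)|^2-|E((E(|u|^2))^{1/2}\bar w f)|^2\bigr)\,d\mu$$
for $f\in L^2(\nu')$ and observes that the hypothesis makes the integrand vanish; the same identity is then recycled in (ii) with $f=\chi_A$, $A\in\mathcal{A}$, $0<\nu'(A)<\infty$. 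You instead prove the stronger pointwise statement that the symbols of the two WCT operators agree a.e., which gives equality of the operators directly (not merely of their quadratic forms on a core) and makes the role of the hypothesis completely transparent; your (ii) then applies $E$ to the tested identity rather than integrating, which is an equivalent way of extracting the $\mathcal{A}$-measurable conclusion. Your version is, if anything, slightly more careful about where $\sigma$-finiteness of $\mathcal{A}$ and the support property $S(f)\subseteq S(E(f))$ enter. The one caveat both arguments share (and which neither you nor the paper addresses) is the silent identification of the composition domain $\mathcal{D}(T^{\ast}T)=\{f\in\mathcal{D}(T):Tf\in\mathcal{D}(T^{\ast})\}$ with the maximal domain of the WCT operator $M_{E(|w|^2)\bar u}EM_u$; since you are working within the paper's stated conventions, this is not a gap attributable to you, but it is worth flagging if you want a fully rigorous statement of normality for unbounded operators.
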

\begin{proof}
(i) Direct computations shows that
$$T^{\ast}T-TT^{\ast}=M_{\bar{u}E(|w|^{2})}EM_{u}-M_{wE(|u|^{2})}EM_{\bar{w}},$$
on $L^{2}(\nu')$.
Hence for every $f\in L^{2}(\nu')$ we have
\begin{align*}
\langle T^{\ast}T-TT^{\ast}(f),f\rangle&=\int_{X}E(|w|^2)E(uf)\bar{uf}-E(|u|^2)E(\bar{w}f)w\bar{f}d\mu\\
&=\int_{X}|E(u(E(|w|^2))^{\frac{1}{2}}f)|^2-|E((E(|u|^2))^{\frac{1}{2}}\bar{w}f)|^2d\mu.
\end{align*}
This implies that if
$$(E(|u|^2))^{\frac{1}{2}}\bar{w}=u(E(|w|^2))^{\frac{1}{2}},$$
then for all $f\in L^2(\nu')$, $\langle
T^{\ast}T-TT^{\ast}(f),f\rangle=0$. Thus $T^{\ast}T=TT^{\ast}$.\\

(ii) Suppose that $T$ is normal. By (i), for all $f\in L^2(\nu')$
we have
$$\int_{X}|E(u(E(|w|^2))^{\frac{1}{2}}f)|^2-|E((E(|u|^2))^{\frac{1}{2}}\bar{w}f)|^2d\mu=0.$$
Let $A\in \mathcal{A}$, with $0<\nu'(A)<\infty$. By replacing $f$
to $\chi_{A}$, we have

$$\int_{A}|E(u(E(|w|^2))^{\frac{1}{2}})|^2-|E((E(|u|^2))^{\frac{1}{2}}\bar{w})|^2d\mu=0$$
and so
$$\int_{A}|E(u)|^2E(|w|^2)-|E(w)|^2E(|u|^2)d\mu=0.$$
Since $A\in \mathcal{A}$ is arbitrary and $\mu\ll \nu'$ (absolutely continuous), then
$|E(u)|^2E(|w|^2)=|E(w)|^2E(|u|^2)$ with respect to $\mu$.
\end{proof}

\begin{prop}
Let the WCT operator $M_wEM_u$ be densely defined on $L^2(\Sigma)$. Then we have the followings:\\

(i) If $u(E(|w|^2))^{\frac{1}{2}}\geq\bar{w}(E(|u|^2))^{\frac{1}{2}}$ with respect to $\mu$, then $T=M_wEM_u$ is hyponormal.\\

(ii) If $T=M_wEM_u$ is hyponormal, then $E(|w|^2)|E(u)|^2\geq E(|u|^2)|E(w)|^2$ with respect to the measure $\mu$.
\end{prop}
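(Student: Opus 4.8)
The plan is to imitate the argument of Proposition \ref{p27}. Recall from the discussion preceding that statement that, with $d\nu'=J'\,d\mu$ and $J'=1+(E(|w|^2))^2(E(|u|^2))^2$, the space $L^2(\nu')$ is dense in $L^2(\mu)$ and is contained in $\mathcal{D}(T)\cap\mathcal{D}(T^\ast)$, that $T^\ast=M_{\bar u}EM_{\bar w}$ by Proposition \ref{p1}, and that $T^\ast T=M_{E(|w|^2)\bar u}EM_u$, $TT^\ast=M_{E(|u|^2)w}EM_{\bar w}$. The first step is to record, using the defining averaging property of $E$ together with $E(fg)=E(f)g$ for $\mathcal{A}$-measurable $g$, the identities
\[
\|Tf\|^2=\int_{\Omega}|w|^2|E(uf)|^2\,d\mu=\int_{\Omega}E(|w|^2)\,|E(uf)|^2\,d\mu,
\]
and likewise $\|T^\ast f\|^2=\int_{\Omega}|u|^2|E(\bar wf)|^2\,d\mu=\int_{\Omega}E(|u|^2)\,|E(\bar wf)|^2\,d\mu$, the last member of each being well defined in $[0,\infty]$ for every $f\in\mathcal{D}(T)$, resp.\ $f\in\mathcal{D}(T^\ast)$, and in particular for $f\in L^2(\nu')$.

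For part (i), I would use the hypothesis $u\,(E(|w|^2))^{\frac{1}{2}}\geq \bar w\,(E(|u|^2))^{\frac{1}{2}}$ to propagate, through the positive operator $E$, the pointwise estimate
\[
E(|u|^2)\,|E(\bar wf)|^2=|E((E(|u|^2))^{\frac{1}{2}}\bar wf)|^2\le |E((E(|w|^2))^{\frac{1}{2}}uf)|^2=E(|w|^2)\,|E(uf)|^2
\]
valid for every $f$, the $\mathcal{A}$-measurable weights $(E(|u|^2))^{\frac{1}{2}}$ and $(E(|w|^2))^{\frac{1}{2}}$ being taken out of $E$. Integrating against $\mu$ and invoking the identities above gives $\|\bar uE(\bar wf)\|\le\|wE(uf)\|$ for all $f$; hence $f\in\mathcal{D}(T)$ forces $\bar uE(\bar wf)\in L^2(\mu)$, i.e.\ $\mathcal{D}(T)\subseteq\mathcal{D}(T^\ast)$, and on $\mathcal{D}(T)$ the same estimate reads $\|T^\ast f\|\le\|Tf\|$, so $T$ is hyponormal. (Alternatively one can first check $\|T^\ast f\|\le\|Tf\|$ on the core $L^2(\nu')$ and then extend it to $\mathcal{D}(T)$, using that $T$ is closed by Proposition \ref{p2} together with a weak-limit argument in the graph of $T^\ast$.)

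For part (ii), assume $T$ is hyponormal, so $\|T^\ast f\|\le\|Tf\|$ for every $f\in\mathcal{D}(T)$, and in particular for $f\in L^2(\nu')$; by the identities above this means $\int_{\Omega}E(|u|^2)|E(\bar wf)|^2\,d\mu\le\int_{\Omega}E(|w|^2)|E(uf)|^2\,d\mu$ for all $f\in L^2(\nu')$. Taking $f=\chi_A$ with $A\in\mathcal{A}$ and $0<\nu'(A)<\infty$, so that $\chi_A\in L^2(\nu')$, $E(\bar w\chi_A)=E(\bar w)\chi_A$ and $E(u\chi_A)=E(u)\chi_A$, and using $|E(\bar w)|=|E(w)|$, we obtain $\int_A\big(E(|w|^2)|E(u)|^2-E(|u|^2)|E(w)|^2\big)\,d\mu\ge 0$. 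Since such $A$ are arbitrary, $\mu\ll\nu'$, and $\mathcal{A}$ is $\sigma$-finite for $\nu'$ (the WCT operator being densely defined, so $E(|w|^2)E(|u|^2)<\infty$ a.e.\ by Theorem \ref{t2}), it follows that $E(|w|^2)|E(u)|^2\ge E(|u|^2)|E(w)|^2$ a.e.\ $\mu$.

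The step I expect to be the real obstacle is the pointwise estimate in part (i): deducing $|E((E(|w|^2))^{\frac{1}{2}}uf)|\ge |E((E(|u|^2))^{\frac{1}{2}}\bar wf)|$ for an arbitrary, possibly complex, $f$ from a comparison of the complex weights $u(E(|w|^2))^{\frac{1}{2}}$ and $\bar w(E(|u|^2))^{\frac{1}{2}}$ is not a formal consequence of the positivity of $E$, so one must either fix precisely the sense in which that inequality between weights is to be read or impose an alignment/positivity hypothesis on $u$ and $w$ under which the monotonicity of $E$ may be applied legitimately. Everything else --- the two norm identities, the reduction of (ii) to the test functions $\chi_A$, and the density and closedness inputs (Lemma \ref{l1}, Proposition \ref{p2}) --- is routine and parallels Proposition \ref{p27}.
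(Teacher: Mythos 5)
Your proposal follows exactly the route the paper intends: the paper's entire proof of this proposition is the single sentence ``By a similar method of Proposition~\ref{p27} we can get the proof,'' and what you have written is that method spelled out. Part (ii) is correct and complete: the two norm identities $\|Tf\|^2=\int_\Omega E(|w|^2)|E(uf)|^2\,d\mu$ and $\|T^\ast f\|^2=\int_\Omega E(|u|^2)|E(\bar wf)|^2\,d\mu$, the choice of test functions $\chi_A$ with $A\in\mathcal{A}$, $0<\nu'(A)<\infty$, and the passage from $\int_A\bigl(E(|w|^2)|E(u)|^2-E(|u|^2)|E(w)|^2\bigr)d\mu\ge0$ for all such $A$ to the pointwise inequality all parallel the paper's argument for the normal case and are sound.

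The obstacle you flag in part (i) is a genuine gap, and it is present in the paper itself, not something you introduced. In Proposition~\ref{p27}(i) the hypothesis is an \emph{equality} of weights, $u(E(|w|^2))^{1/2}=\bar w(E(|u|^2))^{1/2}$, which trivially forces $E\bigl(u(E(|w|^2))^{1/2}f\bigr)=E\bigl(\bar w(E(|u|^2))^{1/2}f\bigr)$ for every $f$; replacing the equality by a pointwise inequality does not propagate through $E$ once $f$ is allowed to change sign. Concretely, on an $\mathcal{A}$-atom consisting of two $\Sigma$-atoms $a,b$ of equal mass, so that $E(h)=\tfrac12(h(a)+h(b))$ there, take $g_1\equiv1\ge g_2=\chi_{\{a\}}\ge0$ and $f=\chi_{\{a\}}-\chi_{\{b\}}$; then $E(g_1f)=0$ while $|E(g_2f)|=\tfrac12$, so $|E(g_1f)|\ge|E(g_2f)|$ fails even for real nonnegative weights and real $f$. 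Hence the stated hypothesis (which, as you note, already implicitly requires the two complex-valued weights to be real in order to be comparable) does not by itself yield $\|T^\ast f\|\le\|Tf\|$ on all of $\mathcal{D}(T)$, and both your part (i) and the paper's are incomplete at exactly this step. Your suggested remedies --- reinterpreting the hypothesis as an inequality between the $\mathcal{A}$-measurable quantities $E(|w|^2)|E(uf)|^2$ and $E(|u|^2)|E(\bar wf)|^2$, or imposing an alignment condition such as $\bar w(E(|u|^2))^{1/2}=\varphi\, u(E(|w|^2))^{1/2}$ with $\varphi$ an $\mathcal{A}$-measurable function of modulus at most $1$, under which the estimate $|E(\bar w(E(|u|^2))^{1/2}f)|\le|E(u(E(|w|^2))^{1/2}f)|$ does follow --- are the right way to repair the statement.
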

\begin{proof}
By a similar method of \ref{p27} we can get the proof.
\end{proof}
\section{ \sc\bf Hyperexpansive WCT operators}

In this section we are going to present conditions under which WCT operator $M_wEM_u$ on $L^2(\Sigma)$ is $k$-isometry, $k$-expansive, $k$-hyperexpansive and completely hyperexpansive.  For an operator $T$ on the Hilbert space $\mathcal{H}$ we set

$$\Theta_{T,n}(f)=\sum_{0\leq i\leq n}(-1)^i\left(%
\begin{array}{c}
  n \\
  i \\
\end{array}%
\right)\|T^i(f)\|^2, \ \ \ \ \ f\in \mathcal{D}(T^n), \ \
n\geq1.$$

By means of this definition  an operator $T$ on $\mathcal{H}$ is said to be:\\

(i) $k$-isometry $(k\geq1)$ if $\Theta_{T,k}(f)=0$ for $f\in
\mathcal{D}(T^k)$,\\

(ii) $k$-expansive $(k\geq1)$ if $\Theta_{T,k}(f)\leq0$ for $f\in
\mathcal{D}(T^k)$,\\

(iii) $k$-hyperexpansive $(k\geq1)$ if $\Theta_{T,n}(f)\leq0$ for
$f\in
\mathcal{D}(T^n)$ and $n=1,2,...,k$.\\

(iv) completely hyperexpansive  if $\Theta_{T,n}(f)\leq0$ for
$f\in
\mathcal{D}(T^n)$ and $n\geq1$.\\
For more details one can see \cite{jab,jjst,sho}. It is easily seen that for every $f\in L^2(\Sigma)$
$$\|M_wEM_u(f)\|_2=\|EM_v(f)\|_2,$$
where $v=u(E(|w|^2))^{\frac{1}{2}}$. Thus without loss of generality we can consider the operator $EM_v$
instead of $M_wEM_u$ in our discussion. First we recall some concepts that we need them in the sequel.  Now we present our main results. The next lemma is a direct consequence of Theorem \ref{t2}.\\
\begin{lem}\label{l31}
     For every $n\in \mathbb{N}$ the operator $(EM_v)^n$ on $L^2(\Sigma)$ is densely-defined if and only if the operator $EM_v$ is densely defined on $L^2(\Sigma)$.
\end{lem}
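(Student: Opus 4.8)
The plan is to make the natural domain of $(EM_v)^n$ explicit and then reduce the whole equivalence to the single pointwise condition $E(|v|^2)<\infty$ a.e., which by Theorem \ref{t2} (with $w=1$, $u=v$, $p=q=2$) is exactly what makes $EM_v$ densely defined on $L^2(\Sigma)$. First I would record, by an immediate induction on $n$ using the pull-out property $E(gh)=g\,E(h)$ valid for $\mathcal A$-measurable $g$ (at the $i$-th step the $\mathcal A$-measurable factor pulled out is $E(v)^{i-1}E(vf)$), that
$$(EM_v)^nf=E(v)^{\,n-1}E(vf),\qquad \mathcal D\big((EM_v)^n\big)=\Big\{f\in L^2(\mu):E(v)^iE(vf)\in L^2(\mu)\ \text{for}\ 0\le i\le n-1\Big\}.$$
In particular $\mathcal D\big((EM_v)^n\big)\subseteq\mathcal D(EM_v)$ (the case $i=0$), so if $(EM_v)^n$ is densely defined, then so is $EM_v$; this implication uses nothing beyond the shape of the domain.

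For the converse, assume $EM_v$ is densely defined, so $E(|v|^2)<\infty$ a.e.\ by Theorem \ref{t2}, hence also $|E(v)|^2\le E(|v|^2)<\infty$ a.e. The delicate point is that $\mathcal D\big((EM_v)^n\big)=\bigcap_{i=0}^{n-1}\mathcal D\big(M_{E(v)^i}EM_v\big)$ is a \emph{finite intersection} of domains, and a finite intersection of dense subspaces need not be dense, so one cannot simply apply Theorem \ref{t2} to each factor separately; this is the one real obstacle. I would get past it by exhibiting a single dense subspace sitting inside the intersection. Set $\Phi=1+\sum_{i=0}^{n-1}|E(v)|^{2i}E(|v|^2)$, an $\mathcal A$-measurable function with $\Phi\ge1$ and $\Phi<\infty$ a.e., and let $d\lambda=\Phi\,d\mu$. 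For $f\in L^2(\lambda)$ we have $f\in L^2(\mu)$, and, combining the conditional Hölder inequality $|E(vf)|^2\le E(|v|^2)E(|f|^2)$ with the averaging identity $\int_\Omega h\,E(|f|^2)\,d\mu=\int_\Omega h|f|^2\,d\mu$ valid for $\mathcal A$-measurable $h\ge0$,
$$\int_\Omega\big|E(v)^iE(vf)\big|^2\,d\mu\le\int_\Omega|E(v)|^{2i}E(|v|^2)|f|^2\,d\mu\le\|f\|_\lambda^2<\infty\qquad(0\le i\le n-1),$$
so $f\in\mathcal D\big((EM_v)^n\big)$; hence $L^2(\lambda)\subseteq\mathcal D\big((EM_v)^n\big)$.

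It remains to see that $L^2(\lambda)$ is dense in $L^2(\mu)$, and here I would copy the density argument from the proof of Lemma \ref{l1}: since $S(\Phi)=\Omega$, given $g\in\big(L^2(\lambda)\big)^{\perp}$ and $A\in\Sigma$, pick $\Omega_m\nearrow\Omega$ with $\mu(\Omega_m)<\infty$ and put $B_m=A\cap\{\Phi\le m\}\cap\Omega_m$; then $\lambda(B_m)\le m\,\mu(\Omega_m)<\infty$ gives $\chi_{B_m}\in L^2(\lambda)$, $B_m\nearrow A$, and passing to the limit (Fatou's lemma applied to the positive and negative parts of $g$) yields $\int_A g\,d\mu=0$ for every $A\in\Sigma$, so $g=0$. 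Thus $L^2(\lambda)$, and a fortiori $\mathcal D\big((EM_v)^n\big)$, is dense in $L^2(\mu)$, which finishes the equivalence. The only step that requires any thought is the construction of the common weight $\Phi$ that simultaneously dominates all $n$ intermediate estimates; the rest is routine manipulation of conditional expectation plus a verbatim reuse of Lemma \ref{l1}.
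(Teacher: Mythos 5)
Your proof is correct. For comparison: the paper offers no argument at all for this lemma, merely asserting that it is ``a direct consequence of Theorem \ref{t2}''; the implicit reasoning is presumably that $(EM_v)^n=M_{E(v)^{n-1}}EM_v$ is itself a WCT operator with weights $w=E(v)^{n-1}$, $u=v$, whose Theorem \ref{t2} criterion $|E(v)|^{2(n-1)}E(|v|^2)<\infty$ a.e.\ is equivalent (via $|E(v)|^2\le E(|v|^2)$) to $E(|v|^2)<\infty$ a.e., i.e.\ to the criterion for $EM_v$. That shortcut silently identifies the composition domain of $(EM_v)^n$ with the maximal domain $\{f:E(v)^{n-1}E(vf)\in L^2(\mu)\}$ of the single WCT operator, which is strictly larger in general; the actual composition domain is the finite intersection $\bigcap_{i=0}^{n-1}\{f:E(v)^iE(vf)\in L^2(\mu)\}$, and density of each factor does not by itself give density of the intersection. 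You correctly isolate this as the one nontrivial point and repair it by building the single dominating weight $\Phi=1+\sum_{i=0}^{n-1}|E(v)|^{2i}E(|v|^2)$ and showing $L^2(\Phi\,d\mu)$ is a dense subspace of $L^2(\mu)$ lying inside the whole intersection, reusing the density mechanism of Lemma \ref{l1}. All the individual steps (the pull-out identity $(EM_v)^nf=E(v)^{n-1}E(vf)$, the conditional H\"older estimate, the averaging identity, and the truncation argument for density) check out, so your version is not just correct but more complete than what the paper provides; the trivial direction via $\mathcal D((EM_v)^n)\subseteq\mathcal D(EM_v)$ is also fine.
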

In the Theorem \ref{t31} we give some necessary and sufficient conditions for $k$-isometry and $k$-expansive WCT operators $EM_v$.

\begin{thm} \label{t31}If $\mathcal{D}(EM_v)$
is dense in $L^2(\mu)$, then:\\

(i) If the operator $EM_v$ is $k$-isometry $(k\geq1)$, then $A^0_{k}(|E(v)|^2)=0$;\\

(ii) If $(1+E(|v|^2)A^1_{k}(|E(v)|^2))=0$ and $|E(vf)|^2=E(|v|^2)E(|f|^2)$ for all $f\in \mathcal{D}(EM_v)$, then the operator $EM_v$ is $k$-isometry;\\

(iii) If the operator $EM_v$ is $k$-expansive, then $A^0_{k}(|E(v)|^2)\leq0$;\\

(iv) If $(1+E(|v|^2)A^1_{k}(|E(v)|^2))\leq0$ and $|E(vf)|^2=E(|v|^2)E(|f|^2)$ for all $f\in \mathcal{D}(EM_v)$, then the operator $EM_v$ is $k$-expansive, where
$$A^0_{k}(|E(v)|^2)=\sum_{0\leq i\leq k}(-1)^i
\left(%
\begin{array}{c}
  k \\
  i \\
\end{array}
\right)|E(v)|^{2i}, \ \ \
A^1_{k}(|E(v)|^2)=\sum_{1\leq i\leq k}(-1)^i
\left(%
\begin{array}{c}
  k \\
  i \\
\end{array}%
\right)|E(v)|^{2(i-1)}.$$
\end{thm}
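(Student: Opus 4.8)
The plan is to compute $\Theta_{EM_v,n}(f)$ explicitly for $f$ in a convenient core and then read off the four claims from the resulting formula. By Lemma~\ref{l31} the operator $(EM_v)^n$ is densely defined whenever $EM_v$ is, and by Lemma~\ref{l1} (applied to each power) the relevant $L^2(\nu)$-type space is a core, so it suffices to verify the sign conditions on a dense subalgebra on which every $(EM_v)^i f$ lies in $L^2(\mu)$. The key computation is the iterated action: since $E(vf)$ is $\mathcal{A}$-measurable and $E$ is idempotent with $E(gh)=gE(h)$ for $\mathcal{A}$-measurable $g$, one gets $(EM_v)^i f = E(v)^{\,i-1}E(vf)$ for $i\geq 1$ (and $(EM_v)^0 f=f$). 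Hence
\begin{align*}
\|(EM_v)^i f\|_2^2 &= \int_\Omega |E(v)|^{2(i-1)}\,|E(vf)|^2\,d\mu, \qquad i\geq 1,\\
\|(EM_v)^0 f\|_2^2 &= \int_\Omega |f|^2\,d\mu.
\end{align*}

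Substituting into the definition of $\Theta_{EM_v,n}(f)$ and separating the $i=0$ term from the rest, I would obtain
\begin{align*}
\Theta_{EM_v,n}(f) &= \|f\|_2^2 + \sum_{1\leq i\leq n}(-1)^i\binom{n}{i}\int_\Omega |E(v)|^{2(i-1)}|E(vf)|^2\,d\mu\\
&= \int_\Omega |f|^2\,d\mu + \int_\Omega |E(vf)|^2\,A^1_{n}(|E(v)|^2)\,d\mu,
\end{align*}
using the definition of $A^1_n$ given in the statement. When one further has $|E(vf)|^2 = E(|v|^2)E(|f|^2)$ for $f$ in the core, the second integrand becomes $E(|v|^2)E(|f|^2)A^1_n(|E(v)|^2)$, and since $A^1_n(|E(v)|^2)$ and $E(|v|^2)$ are $\mathcal{A}$-measurable, pulling $E$ out and using $\int_\Omega E(h)\,d\mu=\int_\Omega h\,d\mu$ gives $\Theta_{EM_v,n}(f)=\int_\Omega |f|^2\bigl(1+E(|v|^2)A^1_n(|E(v)|^2)\bigr)\,d\mu$. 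This immediately yields (ii) and (iv): if $1+E(|v|^2)A^1_k(|E(v)|^2)=0$ (resp. $\leq 0$) then $\Theta_{EM_v,k}(f)=0$ (resp. $\leq 0$) for all $f$ in the core, hence for all $f\in\mathcal{D}((EM_v)^k)$ by the density/closedness argument.

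For the necessary conditions (i) and (iii) I would test $\Theta_{EM_v,k}$ against indicator functions of $\mathcal{A}$-sets: for $A\in\mathcal{A}$ with $0<\nu_{(k)}(A)<\infty$ (where $\nu_{(k)}$ is the measure making $L^2(\nu_{(k)})$ a core for $(EM_v)^k$), take $f=\chi_A$. Then $E(vf)=E(v)\chi_A$, so $|E(vf)|^2=|E(v)|^2\chi_A$, and the displayed formula collapses to $\Theta_{EM_v,k}(\chi_A)=\int_A\bigl(1+|E(v)|^2A^1_k(|E(v)|^2)\bigr)\,d\mu=\int_A A^0_k(|E(v)|^2)\,d\mu$, the last equality because $1+|E(v)|^2A^1_k(|E(v)|^2)$ telescopes to $\sum_{0\leq i\leq k}(-1)^i\binom{k}{i}|E(v)|^{2i}=A^0_k(|E(v)|^2)$ (the $i=0$ term is the leading $1$). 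If $EM_v$ is $k$-isometric then $\int_A A^0_k(|E(v)|^2)\,d\mu=0$ for all such $A$, and since $\mu\ll\nu_{(k)}$ and these $A$ exhaust $\Omega$ up to null sets, $A^0_k(|E(v)|^2)=0$ a.e.; the $k$-expansive case gives the one-sided inequality the same way. The main obstacle I anticipate is the bookkeeping around domains: one must be sure that $\chi_A$ (or the core functions) actually lie in $\mathcal{D}((EM_v)^k)$ and that $A^0_k, A^1_k$ are a.e. finite so the integrals make sense — this is exactly where Lemma~\ref{l1} and Theorem~\ref{t2} (finiteness of the $E$-moments) have to be invoked carefully for each power $n=1,\dots,k$, rather than just for $EM_v$ itself.
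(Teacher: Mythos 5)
Your proposal is correct and follows essentially the same route as the paper: compute $(EM_v)^if=E(v)^{i-1}E(vf)$, expand $\Theta_{EM_v,k}(f)=\int_\Omega|f|^2\,d\mu+\int_\Omega|E(vf)|^2A^1_k(|E(v)|^2)\,d\mu$, test against $\mathcal{A}$-measurable functions (the paper uses general $\mathcal{A}$-measurable $f$ in the domain where you use $\chi_A$) for (i) and (iii), and substitute the hypothesis $|E(vf)|^2=E(|v|^2)E(|f|^2)$ for (ii) and (iv). Your closing remark about verifying that enough $\mathcal{A}$-measurable functions lie in $\mathcal{D}((EM_v)^k)$ is a point the paper glosses over, so your version is, if anything, slightly more careful.
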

\begin{proof}
 Suppose that the operator $EM_v$ is $k$-isometry. So for all $f\in \mathcal{D}((EM_v)^k)$ we have
\begin{align*}
0&=\Theta_{T,k}(f)\\
&=\sum_{0\leq i\leq k}(-1)^i
\left(%
\begin{array}{c}
  n \\
  i \\
\end{array}%
\right)\|(EM_v)^i(f)\|^2\\
&=\int_{\Omega}|f|^2d\mu+\sum_{1\leq i\leq k}(-1)^i
\left(%
\begin{array}{c}
  n \\
  i \\
\end{array}%
\right)\int_{\Omega}|E(v)|^{2(i-1)}|E(vf)|^2d\mu,\\
\end{align*}
and so for all $\mathcal{A}$-measurable functions $f\in \mathcal{D}((EM_v)^k)$
\begin{align*}
0&=\int_{\Omega}|f|^2d\mu+\sum_{1\leq i\leq k}(-1)^i
\left(%
\begin{array}{c}
  n \\
  i \\
\end{array}%
\right)\int_{\Omega}|E(v)|^{2(i-1)}|E(v)|^2|f|^2d\mu\\
&=\int_{\Omega}\left(\sum_{0\leq i\leq k}(-1)^i
\left(%
\begin{array}{c}
  n \\
  i \\
\end{array}%
\right)|E(v)|^{2i}\right)|f|^2d\mu.\\
\end{align*}
Since $(EM_v)^k$ is densely defined, then we get that $A_{k}(|E(v)|^2)=0$.\\
(ii) Let $1+E(|v|^2)A^1_{k}(|E(v)|^2)=0$ and $|E(vf)|^2=E(|v|^2)E(|f|^2)$ for all $f\in \mathcal{D}((EM_v)^k)$. Then for all $f\in \mathcal{D}((EM_v)^k)$ we have
\begin{align*}
\Theta_{T,k}(f)&=\sum_{0\leq i\leq k}(-1)^i
\left(%
\begin{array}{c}
  n \\
  i \\
\end{array}%
\right)\|(EM_v)^i(f)\|^2\\
&=\int_{\Omega}|f|^2d\mu+\sum_{1\leq i\leq k}(-1)^i
\left(%
\begin{array}{c}
  n \\
  i \\
\end{array}%
\right)\int_{\Omega}|E(v)|^{2(i-1)}|E(vf)|^2d\mu\\
&=\int_{\Omega}|f|^2d\mu+\int_{\Omega}\left(\sum_{1\leq i\leq
k}(-1)^i
\left(%
\begin{array}{c}
  n \\
  i \\
\end{array}%
\right)(E(|v|^2))^{2(i-1)}\right)E(|v|^2)E(|f|^2)d\mu\\
&=\int_{\Omega}(1+E(|v|^2)A_{k}(|E(v)|^2))|f|^2d\mu\\
&=0.\\
\end{align*}
This implies that the operator $EM_v$ is $k$-isometry.\\
(iii), (iv). By the same method that is used in (i) and (ii), easily  we get (iii) and (iv).\\
\end{proof}
Here we recall that if the linear
transformation $T=EM_v$ is densely defined on $L^2(\Sigma)$, then $T=EM_v$ is closed and $T^{\ast}=M_{\bar{v}}E$. Also, if $\mathcal{D}(EM_v)$
is dense in $L^2(\Sigma)$ and $v$ is almost every where finite valued, then the operator $EM_v$ is normal if and only if $v\in L^0(\mathcal{A})$ \cite{es}. Hence we have the Remark 3.3 for normal WCT operators.
\begin{rem}\label{r31} Suppose that the operator $EM_v$ is normal and $\mathcal{D}(EM_v)$
is dense in $L^2(\mu)$ for a fixed $k\geq1$. If $|E(f)|^2=E(|f|^2)$ on $S(v)$ for all $f\in \mathcal{D}((EM_v)^k)$,  then:\\

(i) The operator $EM_v$ is $k$-isometry $(k\geq1)$ if and only if $A_{k}(|v|^2)=0$;\\

(ii) The operator $EM_v$ is $k$-expansive if and only if $A_{k}(|v|^2)\leq0$.
\end{rem}
\begin{proof} Since $EM_v$ is normal, then $|E(v)|^2=E(|v|^2)=|v|^2$. Thus by Theorem \ref{t31} we have (i) and (ii).
\end{proof}
Here we give some properties of 2-expansive WCT operators and as a corollary for 2-expansive multiplication operators.
\begin{prop}\label{p31} If
$\mathcal{D}(EM_v)$ is dense in $L^2(\mu)$ and
$EM_v$ is 2-expansive, then:\\

(i) $EM_v$ leaves its domain invariant:\\

(ii) $|E(v)|^{2k}\geq |E(v)|^{2(k-1)}$ a.e. $\mu$ for all $k\geq1$.\\
\end{prop}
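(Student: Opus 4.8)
The plan is to analyze the $2$-expansivity condition $\Theta_{T,2}(f)\le 0$ directly, using the computation already carried out in the proof of Theorem~\ref{t31}. Writing $T=EM_v$, $2$-expansivity says
$$\|f\|^2 - 2\|Tf\|^2 + \|T^2 f\|^2 \le 0 \qquad (f\in\mathcal{D}(T^2)).$$
Since $\|T^i f\|_2^2=\int_\Omega |E(v)|^{2(i-1)}|E(vf)|^2\,d\mu$ for $i\ge1$, this translates, after the same manipulation as in Theorem~\ref{t31}, into
$$\int_\Omega |f|^2\,d\mu \le \int_\Omega \bigl(2-|E(v)|^2\bigr)|E(vf)|^2\,d\mu.$$
For part~(i), I would first argue that $2$-expansivity forces $\|Tf\|\ge\|f\|$ (this is the $k=1$ instance built into $\Theta_{T,2}\le 0$ together with $\Theta_{T,1}$; more precisely one uses that a $2$-expansive operator is expansive, which follows from the standard inequality $\Theta_{T,n+1}\le \Theta_{T,n}$ or can be extracted from the displayed inequality above). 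Then, to show $T$ maps $\mathcal{D}(T)$ into itself, take $f\in\mathcal{D}(T)=\mathcal{D}(EM_v)$; I must check $Tf=E(vf)\in\mathcal{D}(T)$, i.e.\ $E(v\,E(vf))=E(v)E(vf)\in L^2(\mu)$. The key point is that $\|E(v)E(vf)\|_2^2=\int|E(v)|^2|E(vf)|^2\,d\mu$, and the displayed inequality controls $\int(2-|E(v)|^2)|E(vf)|^2\,d\mu$ from below by $\|f\|^2\ge 0$; combined with $\|Tf\|^2=\int|E(vf)|^2\,d\mu<\infty$ this should bound $\int|E(v)|^2|E(vf)|^2\,d\mu$. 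I would make this rigorous by restricting to the sets $\{|E(v)|^2\le n\}$ and letting $n\to\infty$, or alternatively by invoking Lemma~\ref{l1}/Lemma~\ref{l31} to reduce to the core $L^2(\nu)$ on which everything is finite and $(EM_v)^n$ is defined, then passing to the closure using that $EM_v$ is closed (Proposition~\ref{p2}).

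For part~(ii), the strategy is to feed $\mathcal{A}$-measurable test functions into the $2$-expansivity inequality and then iterate. Taking $f=\chi_A$ with $A\in\mathcal{A}$ of finite $\nu$-measure, so that $E(vf)=E(v)\chi_A$, the inequality above becomes $\int_A |E(v)|^2\bigl(|E(v)|^2-1\bigr)\,d\mu \ge 0$ for every such $A$ — wait, more carefully it gives $\int_A\bigl(1-(2-|E(v)|^2)|E(v)|^2\bigr)\,d\mu\le0$, i.e.\ $\int_A\bigl(|E(v)|^2-1\bigr)^2\,d\mu\le 0$, forcing $|E(v)|^2=1$ a.e.\ on $S(v)\cap\{\text{relevant set}\}$; this already yields $|E(v)|^{2k}=|E(v)|^{2(k-1)}$ there. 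On the complementary region where $E(v)=0$ the claimed inequality $|E(v)|^{2k}\ge|E(v)|^{2(k-1)}$ reads $0\ge 0$ — which fails for $k=1$ since then it reads $1\ge 1$ only where $E(v)\ne0$... so I expect the honest statement to come from applying the inequality to $f=|E(v)|^{2m}\chi_A$ or $f=(EM_v)^{m}$-type iterates and using part~(i) to stay in the domain, obtaining $\int_A|E(v)|^{2(k-1)}\bigl(|E(v)|^2-1\bigr)^2 \ge$ something, hence $|E(v)|^{2k}\ge|E(v)|^{2(k-1)}$ pointwise a.e.

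\textbf{Main obstacle.} The delicate point is the domain bookkeeping in part~(i): $T=EM_v$ is unbounded, $\mathcal{D}(T^2)$ is genuinely smaller than $\mathcal{D}(T)$, and the inequality $\Theta_{T,2}(f)\le0$ is only assumed on $\mathcal{D}(T^2)$, yet the conclusion must hold on all of $\mathcal{D}(T)$. I expect to handle this by a truncation/density argument: show the invariance first on the core $L^2(\nu)$ (where $(EM_v)^n$ is everywhere defined by Lemma~\ref{l31} and all integrals are finite), derive the pointwise identity $|E(v)|^2=1$ on $S(v)$ from part~(ii)'s computation, and then use closedness of $EM_v$ to promote $E(v)E(vf)\in L^2(\mu)$ — equivalently $Tf\in\mathcal{D}(T)$ — to arbitrary $f\in\mathcal{D}(T)$. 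Establishing that the pointwise relation $|E(v)|=1$ on $S(v)$ is legitimately forced (rather than only $\nu$-a.e.\ on a core-dependent set) is where care is needed, and it is the ingredient that makes part~(ii) fall out for free.
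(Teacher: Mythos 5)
Your proposal is essentially sound, but the two parts relate to the paper differently. For (i) you take the same route as the paper: rearrange $\Theta_{T,2}(f)\le 0$ to read $\int_\Omega|E(v)|^2|E(vf)|^2\,d\mu\le 2\|E(vf)\|^2-\|f\|^2<\infty$, which is exactly the paper's one-line argument. Your extra caution about domains is warranted and in fact exposes a gap the paper glosses over: the hypothesis $\Theta_{T,2}(f)\le 0$ is only given on $\mathcal{D}(T^2)$, whereas the conclusion of (i) is precisely that $\mathcal{D}(T)\subseteq\mathcal{D}(T^2)$, so applying the inequality directly to every $f\in\mathcal{D}(EM_v)$, as the paper does, is circular; your truncation over $\{|E(v)|^2\le n\}$ together with monotone convergence (or the core $L^2(\nu)$ plus closedness) is the right way to repair this. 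For (ii) you diverge genuinely from the paper: the paper first uses (i) to get $\mathcal{D}(EM_v)\subseteq\mathcal{D}^\infty(EM_v)$, then cites Lemma 3.2(iii) of Jablonski--Stochel to obtain $\|(EM_v)^kf\|\ge\|(EM_v)^{k-1}f\|$ and reads off the pointwise inequality from $\int_\Omega\bigl(|E(v)|^{2(k-1)}-|E(v)|^{2(k-2)}\bigr)|E(vf)|^2\,d\mu\ge0$; you instead test $\Theta_{T,2}\le0$ directly on $f=\chi_A$, $A\in\mathcal{A}$, getting $\int_A(1-|E(v)|^2)^2\,d\mu\le0$. Your route is more elementary (no external lemma, no need for (i)) and actually proves the stronger statement $|E(v)|^2=1$ a.e.

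The one place you should tighten up is the end of (ii), where you talk yourself out of a finished proof. Since $EM_v$ is densely defined, $E(|v|^2)<\infty$ a.e.\ (Theorem \ref{t2} with $w=1$), so the $\mathcal{A}$-sets $A_n=\{E(|v|^2)\le n\}\cap\Omega_n$ satisfy $\chi_{A_n}\in\mathcal{D}((EM_v)^2)$ and increase to $\Omega$; hence $\int_{A_n}(1-|E(v)|^2)^2\,d\mu\le0$ forces $|E(v)|^2=1$ a.e.\ on all of $\Omega$, not merely on $S(v)$ or some core-dependent set. The region $\{E(v)=0\}$ that worries you is therefore $\mu$-null, the case $k=1$ reads $1\ge1$ everywhere, and no iterated test functions of the form $|E(v)|^{2m}\chi_A$ are needed. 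With that observation your argument closes, and (ii) follows trivially from $|E(v)|^2=1$ a.e.
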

\begin{proof} (i). Since $EM_v$ is 2-expansive, we get
that for every $f\in
\mathcal{D}(EM_v)$
\begin{align*}
\|(EM_v)^2(f)\|^2&=\int_{\Omega}|E(v)|^2|E(vf)|^2d\mu\\
&\leq 2\int_{\Omega}|E(vf)|^2d\mu-\int_{\Omega}|f|^2d\mu\\
&<\infty,\\
\end{align*}
so $EM_v(f)\in \mathcal{D}(EM_v)$.\\
(ii) Since $EM_v$ leaves its domain invariant, then
$\mathcal{D}(EM_v)\subseteq \mathcal{D}^{\infty}(EM_v)$.
So by lemma 3.2 (iii) of \cite{jjst} we get that
$\|(EM_v)^k(f)\|^2\geq \|(EM_v)^{k-1}(f)\|^2$ for all
$f\in \mathcal{D}(EM_v)$ and $k\geq1$ we have
$$\int_{\Omega}|E(v)|^{2(k-1)}|E(vf)|^2d\mu\geq \int_{\Omega}|E(v)|^{2(k-2)}|E(vf)|^2d\mu,$$
and so
$$\int_{\Omega}(|E(v)|^{2(k-1)}-|E(v)|^{2(k-2)})|E(vf)|^2d\mu\geq 0,$$
for all $f\in \mathcal{D}(EM_v)$. This leads to $|E(v)|^{2k}\geq |E(v)|^{2(k-1)}$ a.e., $\mu$.
\end{proof}
\begin{cor} If $\mathcal{D}(M_v)$ is
dense in $L^2(\mu)$ and
$M_v$ is 2-expansive, then:\\

(i) $M_v$ leaves its domain invariant:\\

(ii) $v^{2k}\geq v^{2(k-1)}$ a.e. $\mu$ for all $k\geq1$.\\
\end{cor}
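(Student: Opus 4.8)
The plan is to obtain this corollary as a pure specialization of Proposition \ref{p31}, taking the $\sigma$-subalgebra to be all of $\Sigma$. First I would set $\mathcal{A}=\Sigma$. Then $E^{\Sigma}(f)=f$ for every $f\in L^0(\Sigma)$, so the conditional expectation operator $E$ is just the identity on $L^2(\Sigma)$. In this case every $v\in L^0(\Sigma)$ is conditionable, $(\Omega,\mathcal{A},\mu)=(\Omega,\Sigma,\mu)$ is $\sigma$-finite, and the standing hypothesis $E(|v|^2)<\infty$ a.e.\ $\mu$ (which, by Theorem \ref{t2} with $w\equiv 1$, is exactly the density condition for $EM_v$) reduces to the trivially true statement $|v|^2<\infty$ a.e.\ $\mu$. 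Moreover $EM_v$ literally coincides with $M_v$: one has $EM_v(f)=E(vf)=vf=M_v(f)$ and $\mathcal{D}(EM_v)=\{f\in L^2(\Sigma):vf\in L^2(\Sigma)\}=\mathcal{D}(M_v)$.

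Next I would note that this identification is compatible with everything used in Proposition \ref{p31}. Since $(EM_v)^n=M_v^{\,n}=M_{v^n}$ with the evident domains, the functionals $\Theta_{EM_v,n}$ and $\Theta_{M_v,n}$ agree on $\mathcal{D}(M_v^{\,n})$; consequently ``$M_v$ is $2$-expansive'' means precisely ``$EM_v$ is $2$-expansive'', and ``$\mathcal{D}(M_v)$ is dense in $L^2(\mu)$'' means precisely ``$\mathcal{D}(EM_v)$ is dense in $L^2(\mu)$''. Thus the hypotheses of the corollary are exactly the hypotheses of Proposition \ref{p31} read in the case $\mathcal{A}=\Sigma$.

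Finally I would invoke Proposition \ref{p31} directly. Part (i) yields at once that $M_v=EM_v$ leaves its domain invariant, which is claim (i). For claim (ii), the conclusion $|E(v)|^{2k}\geq|E(v)|^{2(k-1)}$ a.e.\ $\mu$ becomes $|v|^{2k}\geq|v|^{2(k-1)}$ a.e.\ $\mu$ for all $k\geq1$; this is the asserted inequality $v^{2k}\geq v^{2(k-1)}$, where $v^{2m}$ is read as $|v|^{2m}$ (equivalently, $v$ is taken real-valued, as the statement implicitly presumes).

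As for difficulty, there is essentially no obstacle: the only point requiring a line of verification is that the measure-theoretic side conditions attached to the operator $EM_v$ — conditionability of $v$, $\sigma$-finiteness of $(\Omega,\mathcal{A},\mu)$, and the a.e.-finiteness condition of Theorem \ref{t2} — all hold automatically once $\mathcal{A}=\Sigma$, after which (i) and (ii) fall out verbatim. If one preferred to avoid the reduction, one could instead repeat the short computations in the proof of Proposition \ref{p31} with $E$ deleted and $E(v)$, $E(vf)$ replaced by $v$, $vf$ throughout, applying Lemma 3.2(iii) of \cite{jjst} in the same manner; but the specialization argument is shorter and makes the logical dependence on Proposition \ref{p31} transparent.
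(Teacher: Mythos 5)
Your proposal is correct and matches the paper's (implicit) derivation exactly: the corollary is stated without proof precisely because it is the specialization $\mathcal{A}=\Sigma$, $E=I$ of Proposition \ref{p31}, as the paper itself signals ("as a corollary for 2-expansive multiplication operators" and, later, "if $\mathcal{A}=\Sigma$, then $E=I$"). Your verification that the side conditions become vacuous and your reading of $v^{2k}$ as $|v|^{2k}$ are sensible additions but do not change the route.
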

Recall that a real-valued map $\phi$ on $\mathbb{N}$ is said to be completely alternating if $\sum_{0\leq i\leq n}(-1)^i\left(%
\begin{array}{c}
  n \\
  i \\
\end{array}%
\right)\phi(m+i)\leq0$ for all $m\geq0$ and $n\geq1$. The next remark  is a direct consequence of Lemma \ref{l31} and Theorem \ref{t31}.
\begin{rem}\label{r32}
 If $\mathcal{D}(EM_v)$
is dense in $L^2(\mu)$ and $k\geq1$ is fixed, then:\\

(i) If the operator $EM_v$ is $k$-hyperexpansive $(k\geq1)$, then $A^0_{n}(|E(v)|^2)\leq0$ for
 $n=1,2,...,k$;\\

(ii) If $(1+E(|v|^2)A^1_{n}(|E(v)|^2))\leq0$  and $|E(vf)|^2=E(|v|^2)E(|f|^2)$ for all $f\in \mathcal{D}(EM_v)^n$ and  $n=1,2,...,k$, then the operator $EM_v$ is $k$-hyperexpansive $(k\geq1)$;\\

(iii) If the operator $EM_v$ is completely hyperexpansive, then\\

(a) the sequence $\{|E(v)(t)|^2\}^{\infty}_{n=0}$ is a completely alternating sequence for almost every $t\in \Omega$,\\

(b) $A^0_{n}(|E(v)|^2)\leq0$ for $n\geq1$.\\

(iv) If $(1+E(|v|^2)A^1_{n}(|E(v)|^2))\leq0$ and $|E(vf)|^2=E(|v|^2)E(|f|^2)$ for all $f\in \mathcal{D}((EM_v)^n)$ and $n\geq1$, then the operator $EM_v$ is completely hyperexpansive.
\end{rem}
%
%
%
%

By  Remark \ref{r32} and some properties of normal WCT operators we get the next remark for $k$-hyperexpansive and completely hyperexpansive normal WCT operators.
\begin{rem}\label{r33}
Let the operator $EM_v$ be normal, $\mathcal{D}(EM_v)$
be dense in $L^2(\mu)$ and $k\geq1$ be fixed. If $|E(f)|^2=E(|f|^2)$ on $S(v)$ for all $f\in \mathcal{D}((EM_v)^k)$, then\\

(i) $EM_v$ is $k$-hyperexpansive $(k\geq1)$ if and only if  $A_{n}(|v|^2)\leq0$ for
$f\in
\mathcal{D}(T^n)$ and $n=1,2,...,k$.\\

(ii)  $EM_v$ is completely hyperexpansive  if and only if  the sequence $\{|u(t)|^2\}^{\infty}_{n=0}$ is a completely alternating sequence for almost every $t\in \Omega$,\\
\end{rem}

%
%
%
%
%
%
If all functions $v^{2i}$ for $i=1,. . . ,n$ are finite
valued, then we set
 $$\triangle_{v,n}(x)=\sum_{0\leq i\leq n}(-1)^i
\left(%
\begin{array}{c}
  n \\
  i \\
\end{array}%
\right)|v|^{2i}(t).
$$

Also, if $\mathcal{A}=\Sigma$, then $E=I$. So we have next two corollaries.
\begin{cor} If $\mathcal{D}(M_v)$
is dense in $L^2(\mu)$ for a fixed $n\geq1$, then:\\

(i) $M_v$ is $k$-expansive if and only if
$\triangle_{v,n}(x)\leq 0$ a.e. $\mu$.\\

(ii) $M_v$ is $k$-isometry if and only $\triangle_{v,n}(x)=0$ a.e. $\mu$.
\end{cor}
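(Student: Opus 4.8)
The plan is to derive this directly from the preceding Remark~\ref{r32} (or equivalently Theorem~\ref{t31}) by specializing to $\mathcal{A}=\Sigma$, so that $E=I$ and the WCT operator $EM_v$ collapses to the multiplication operator $M_v$. Under $E=I$ we have $E(f)=f$ and $E(|f|^2)=|f|^2$ for every $f$, so the hypothesis $|E(vf)|^2=E(|v|^2)E(|f|^2)$ becomes $|vf|^2=|v|^2|f|^2$, which holds identically; and $E(v)=v$, $E(|v|^2)=|v|^2$. First I would record that $\mathcal{D}(M_v^n)=\mathcal{D}((EM_v)^n)$ under this identification, and that by Theorem~\ref{t2} (via Lemma~\ref{l31}) density of $\mathcal{D}(M_v)$ is equivalent to density of $\mathcal{D}(M_v^n)$ for every $n$, so the standing density hypothesis is exactly what is needed to invoke the earlier results.

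Next I would substitute into the two polynomial symbols. With $E(v)=v$ one has
$$
A^0_{n}(|v|^2)=\sum_{0\leq i\leq n}(-1)^i\binom{n}{i}|v|^{2i}=\triangle_{v,n}(x),
$$
so the condition $A^0_{n}(|E(v)|^2)\leq 0$ of Remark~\ref{r32}(i) reads $\triangle_{v,n}(x)\leq 0$ a.e.\ $\mu$, and the condition $A^0_{n}(|E(v)|^2)=0$ of Theorem~\ref{t31}(i) reads $\triangle_{v,n}(x)=0$ a.e.\ $\mu$. For the sufficiency direction one checks that
$$
1+E(|v|^2)A^1_{n}(|E(v)|^2)=1+|v|^2\sum_{1\leq i\leq n}(-1)^i\binom{n}{i}|v|^{2(i-1)}=\sum_{0\leq i\leq n}(-1)^i\binom{n}{i}|v|^{2i}=\triangle_{v,n}(x),
$$
the same polynomial, so the hypothesis $(1+E(|v|^2)A^1_{n}(|E(v)|^2))\leq 0$ (resp.\ $=0$) of Remark~\ref{r32}(ii) / Theorem~\ref{t31}(ii),(iv) is again just $\triangle_{v,n}(x)\leq 0$ (resp.\ $=0$) a.e. Hence both implications in (i) and in (ii) follow at once from the cited results, with the remaining algebraic hypothesis on conditional expectations being vacuously true.

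There is no real obstacle here; the only point requiring a line of care is the identity $1+|v|^2 A^1_{n}(|v|^2)=A^0_{n}(|v|^2)=\triangle_{v,n}(x)$, i.e.\ that the ``$k$-isometry/$k$-expansive'' symbol obtained after the Hölder-type step coincides with the bare alternating sum $\triangle_{v,n}$ when $E=I$; this is an elementary reindexing of the binomial sum (pulling the $i=0$ term out and absorbing the factor $|v|^2$). One should also note that in the statement the index is written $k$ while the running index in the conclusion is $n$; as in Theorem~\ref{t31} and Remark~\ref{r32} these are meant to be the same fixed integer, so I would simply take $n=k$ throughout. With these identifications the corollary is immediate, and no separate argument beyond quoting Remark~\ref{r32} and Theorem~\ref{t31} is needed.
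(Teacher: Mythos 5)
Your proposal is correct and is exactly the paper's intended argument: the paper offers no separate proof, merely observing that when $\mathcal{A}=\Sigma$ one has $E=I$, so the corollary is read off from Theorem~3.2 and Remark~3.6 after noting that the auxiliary hypothesis $|E(vf)|^2=E(|v|^2)E(|f|^2)$ becomes automatic and that $1+|v|^2A^1_{n}(|v|^2)=A^0_{n}(|v|^2)=\triangle_{v,n}$. Your reindexing check and the identification $n=k$ supply precisely the details the paper leaves implicit.
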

\begin{cor} Let $\mathcal{D}(M_v)$
be dense in $L^2(\mu)$ and $k\geq1$ be fixed. Then\\

(i) $M_v$ is $k$-hyperexpansive $(k\geq1)$ if and only if  $\triangle_{v,n}(t)\leq0$ a.e., $\mu$ for $n=1,2,...,k$.\\

(ii) $M_v$ is completely hyperexpansive  if and only if  the sequence $\{|u(t)|^2\}^{\infty}_{n=0}$ is a completely alternating sequence for almost every $t\in \Omega$.
\end{cor}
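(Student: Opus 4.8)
The plan is to specialize Remark \ref{r32} (equivalently, to re-run the computation behind Theorem \ref{t31}) to the case $\mathcal{A}=\Sigma$, where $E$ is the identity operator and $EM_v$ is literally the multiplication operator $M_v$. The first step is to observe that with $E=I$ every auxiliary hypothesis in Remark \ref{r32} becomes vacuous: since $E(v)=v$, the identities $|E(f)|^2=E(|f|^2)$ and $|E(vf)|^2=E(|v|^2)E(|f|^2)$ hold trivially for all $f$, and both of the expressions $A^0_n(|E(v)|^2)$ and $1+E(|v|^2)A^1_n(|E(v)|^2)$ collapse to the single function $\triangle_{v,n}=\sum_{0\le i\le n}(-1)^i\binom{n}{i}|v|^{2i}$. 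It is precisely this collapse that turns the one-sided implications of Remark \ref{r32} into the equivalences asserted here.

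Concretely, for $f\in\mathcal{D}(M_v^n)$ one has $\|M_v^if\|_2^2=\int_\Omega|v|^{2i}|f|^2\,d\mu$ for $0\le i\le n$, whence
\[
\Theta_{M_v,n}(f)=\sum_{0\le i\le n}(-1)^i\binom{n}{i}\int_\Omega|v|^{2i}|f|^2\,d\mu=\int_\Omega\triangle_{v,n}(t)\,|f(t)|^2\,d\mu .
\]
Thus $M_v$ is $k$-hyperexpansive exactly when $\int_\Omega\triangle_{v,n}|f|^2\,d\mu\le0$ for all $f\in\mathcal{D}(M_v^n)$ and all $n=1,\dots,k$. The implication $\triangle_{v,n}\le0$ a.e. $\Rightarrow$ $k$-hyperexpansive is immediate. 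For the converse I would argue by contradiction: the density of $\mathcal{D}(M_v)$ forces $v$ to be finite a.e., and (together with Lemma \ref{l31}, or directly) it lets one exhibit, inside any set where $\triangle_{v,n}>0$ on positive measure, a subset $B$ with $0<\mu(B)<\infty$ on which $v$ is bounded; then $\chi_B\in\mathcal{D}(M_v^n)$ and $\Theta_{M_v,n}(\chi_B)=\int_B\triangle_{v,n}\,d\mu>0$, contradicting $k$-hyperexpansivity. This gives (i).

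For (ii), complete hyperexpansivity means $\Theta_{M_v,n}(f)\le0$ for all $n\ge1$ and $f\in\mathcal{D}(M_v^n)$, which by the same reasoning is equivalent to $\triangle_{v,n}(t)\le0$ a.e. for every $n\ge1$. It then remains to recognize this as complete alternation of the sequence $n\mapsto|v(t)|^{2n}$, for which I would use the pointwise identity
\[
\sum_{0\le i\le n}(-1)^i\binom{n}{i}|v(t)|^{2(m+i)}=|v(t)|^{2m}\,\triangle_{v,n}(t),\qquad m\ge0,\ n\ge1 ;
\]
since $|v(t)|^{2m}\ge0$, the left-hand side is $\le0$ for all $m\ge0$ if and only if $\triangle_{v,n}(t)\le0$ (take $m=0$ in one direction, multiply by $|v(t)|^{2m}$ in the other), and quantifying over $n$ and a.e. $t$ yields (ii). The only genuinely delicate point in the argument is the production of test functions lying in $\mathcal{D}(M_v^n)$ for the ``only if'' directions; the remainder is bookkeeping with binomial sums.
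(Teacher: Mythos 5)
Your proposal is correct and follows exactly the route the paper intends: the corollary is stated as the specialization of Remark \ref{r32} (and the computation behind Theorem \ref{t31}) to $\mathcal{A}=\Sigma$, where $E=I$, the auxiliary hypotheses become vacuous, and both bounds collapse to $\triangle_{v,n}$. In fact you supply more detail than the paper does (the test-function argument for the ``only if'' directions and the pointwise identity linking $\triangle_{v,n}\le0$ for all $n$ to complete alternation of $n\mapsto|v(t)|^{2n}$), all of which is sound.
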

Finally we give some examples.
\begin{exam}
Let $\Omega=[-1,1]$, $d\mu=\frac{1}{2}dx$ and $\mathcal{A}=<\{(-a,a):0\leq a\leq1\}>$ (Sigma algebra generated by symmetric intervals).
Then
 $$E^{\mathcal{A}}(f)(t)=\frac{f(t)+f(-t)}{2}, \ \ t\in \Omega,$$
 where $E^{\mathcal{A}}(f)$ is defined. If $v(t)=e^{t}$, then $E^{\mathcal{A}}(v)(t)=\cosh(t)$ and we have the followings:\\

 1) $E^{\mathcal{A}}M_v$ is densely defined and closed on $L^p(\Omega)$.\\

 2) $\sigma(E^{\mathcal{A}}M_v)=\mathcal{R}(\cosh(t))$.\\

 3) $E^{\mathcal{A}}M_v$ is not 2-expansive, since

 \begin{align*}
 1-2|E(v)|^2(t)+|E(v)|^4(t)&=1-2\cosh^2(t)+\cosh^4(t)\\
 &=(\cosh^2(t)-1)^2\geq0.
 \end{align*}
  \end{exam}
 \begin{exam}
 Let $\Omega=\mathbb{N}$, $\mathcal{G}=2^{\mathbb{N}}$ and let
$\mu(\{t\})=pq^{t-1}$, for each $t\in \Omega=$, $0\leq p\leq 1$ and
$q=1-p$. Elementary calculations show that $\mu$ is a probability
measure on $\mathcal{G}$. Let $\mathcal{A}$ be the
$\sigma$-algebra generated by the partition
$B=\{\Omega_1=\{3n:n\geq1\}, \Omega^{c}_1\}$ of $\Omega$. So, for every $f\in
\mathcal{D}(E^{\mathcal{A}})$ we have
$$E(f)=\alpha_1\chi_{\Omega_1}+\alpha_2\chi_{\Omega^c_1}$$
and direct computations show that
$$\alpha_1(f)=\frac{\sum_{n\geq1}f(3n)pq^{3n-1}}{\sum_{n\geq1}pq^{3n-1}}$$
and
$$\alpha_2(f)=\frac{\sum_{n\geq1}f(n)pq^{n-1}-\sum_{n\geq1}f(3n)pq^{3n-1}}{\sum_{n\geq1}pq^{n-1}-\sum_{n\geq1}pq^{3n-1}}.$$
So, if $u$ and $w$ are real functions on $\Omega$. Then we have the followings:\\

1) If $\alpha_1((|u|^q)^{\frac{p}{q}})\alpha_1(|w|^p)<\infty$ and $\alpha_2((|u|^q)^{\frac{p}{q}})\alpha_2(|w|^p)<\infty$ , then the operator $M_wEM_u$ is  a densely defined and closed operator on $L^p(\Omega)$.\\

2) $\sigma(M_wEM_u)=\{\alpha_1(E(uw)), \alpha_2(E(uw))\}$.
\end{exam}
\begin{exam} Let $\Omega=[0,1]\times
[0,1]$, $d\mu=dtdt'$, $\Sigma$  the  Lebesgue subsets of $\Omega$ and
let $\mathcal{A}=\{A\times [0,1]: A \ \mbox{is a Lebesgue set in}
\ [0,1]\}$. Then, for each $f$ in $L^2(\Sigma)$, $(Ef)(t,
t')=\int_0^1f(t,s)ds$, which is independent of the second
coordinate. Hence for $v(t,t')=t^m$ we get that $v$ is $\mathcal{A}$-measurable and $EM_v$ is $k$-expansive and $k$-isometry if
$$\sum_{0\leq i\leq k}(-1)^i
\left(%
\begin{array}{c}
  k \\
  i \\
\end{array}
\right)x^{2mi}\leq0, \ \ \ \ \sum_{0\leq i\leq k}(-1)^i
\left(%
\begin{array}{c}
  k \\
  i \\
\end{array}
\right)t^{2mi}=0,$$
respectively. This example is valid in the general case as follows:\\
Let $(\Omega_1,\Sigma_1, \mu_1)$ and $(\Omega_2,\Sigma_2, \mu_2)$ be two
$\sigma$-finite measure spaces and $\Omega=\Omega_1\times \Omega_2$,
$\Sigma=\Sigma_1\times \Sigma_2$ and $\mu=\mu_1\times \mu_2$. Put
$\mathcal{A}=\{A\times \Omega_2:A\in \Sigma_1\}$. Then $\mathcal{A}$ is
a sub-$\sigma$-algebra of $\Sigma$. Then for all $f$ in domain
$E^{\mathcal{A}}$ we have
$$E^{\mathcal{A}}(f)(t_1)=E^{\mathcal{A}}(f)(t_1,t_2)=\int_{\Omega_2}f(t_1,s)d\mu_2(s) \ \ \
\mu-a.e.$$ on $\Omega$.\\

Also, if $(\Omega,\Sigma, \mu)$ is a finite measure space and
$k:\Omega\times \Omega\rightarrow \mathbb{C}$ is a $\Sigma\otimes
\Sigma$-measurable function such that
$$\int_{\Omega}|k(.,s)f(s)|d\mu(s)\in L^2(\Sigma)$$
for all $f\in L^2(\Sigma)$. Then the operator
$T:L^2(\Sigma)\rightarrow L^2(\Sigma)$ defined by
$$Tf(t)=\int_{\Omega}k(t,s)f(s)d\mu, \ \ \ \ \ f\in L^2(\Sigma),$$
is called kernel operator on $L^2(\Sigma)$). We show that $T$ is a
weighted conditional type operator.\cite{gd} Since
$L^2(\Sigma)\times \{1\}\cong L^2(\Sigma)$ and
 $vf$ is a $\Sigma\otimes \Sigma$-measurable function, when $f\in
L^2(\Sigma)$. Then by taking $v:=k$ and $f'(t,s)=f(s)$, we get
that
\begin{align*}
E^{\mathcal{A}}(vf)(t)&=E^{\mathcal{A}}(vf')(t,s)\\
&=\int_{\Omega}v(t,t')f'(,t')d\mu(t')\\
&=\int_{\Omega}v(t,t')f(t')d\mu(t')\\
&=Tf(t).\\
\end{align*}
Hence $T=EM_v$, i.e, $T$ is a weighted conditional type operator. This means all assertions of this paper are valid for a class of integral type operators.
\end{exam}

%


\begin{thebibliography}{99}
%
%
%
%
%

\bibitem{dhd} P.G. Dodds, C.B. Huijsmans and B. De Pagter,
characterizations of conditional expectation-type operators,
Pacific J. Math. {\bf 141}(1) (1990), 55-77.

\bibitem{dou}
 R. G. Douglas, Contractive projections on an $L\sb{1}$ space,
 Pacific J. Math. {\bf 15} (1965), 443-462.

\bibitem{es} Y. Estaremi, Unbounded weighted conditional expectation operators, Complex Anal. Oper. Theory,to appear 2015.

\bibitem{ej} Y. Estaremi and M.R. Jabbarzadeh, Weighted lambert type operators on
$L^{p}$-spaces, Oper. Matrices {\bf 1} (2013), 101-116.


\bibitem{gd}
J. J. Grobler and B. de Pagter, Operators representable as
multiplication-conditional expectation operators, J. Operator
Theory {\bf 48} (2002), 15-40.



\bibitem {jab} Z. J. Jablonski. Complete hyperexpansivity, subnormality and inverted boundedness conditions;
Integral Equations Operator Theory {\bf 44} (2002) 316–336.


\bibitem{jjst} Z. J. Jablonski and J. Stochel, Unbounded
2-hyperexpansive operators, Proc. Edinburgh Math. Soc. {\bf 44}
(2001), 613-629.

\bibitem{lam}
A. Lambert, $L^p$ multipliers and nested sigma-algebras, Oper.
Theory Adv. Appl. {\bf 104} (1998),  147-153.


\bibitem{mo}
Shu-Teh Chen, Moy,  Characterizations of conditional expectation
as a transformation on function spaces, Pacific J. Math. {\bf 4}
(1954), 47-63.

%

\bibitem{rao}
M. M. Rao, Conditional measure and applications, Marcel Dekker,
New York, 1993.

%


\bibitem{sho} V. M. Sholapurkar and A. Athavale, Completely and alternatingly hyperexpansive operators.
J. Operator Theory {\bf 43} (2000), 43-68.



\bibitem{z}
A. C. Zaanen, Integration, 2nd ed., North-Holland, Amsterdam,
1967.

\end{thebibliography}
\end{document}